\theoremstyle{plain}
\newtheorem{theorem}{Theorem}[section]
\theoremstyle{definition}
\newtheorem{definition}[theorem]{Definition}
\begin{document}

\title{\textbf{Curves of Constant Breadth According to Darboux Frame in a Strict Walker 3-Manifold}}
\author {{Ameth Ndiaye}\thanks{{ 
 E--mail: \texttt{ameth1.ndiaye@ucad.edu.sn} (A. Ndiaye)}}\\
 D\'epartement de Math\'ematiques, FASTEF, UCAD, Dakar, Senegal.}

\date{}
\maketitle%

\begin{abstract} 
In this paper, we investigate the differential geometry properties of curves of constant breadth according to Darboux frame in a given strict Walker 3-manifold. The considered curves are lying on a timelike surface in the Walker 3-manifold.
\end{abstract}
\begin{small} {\textbf{MSC:} 53B25 ; 53C40.}
\end{small}\\
\begin{small} {\textbf{Keywords:} Darboux frame, curvature, torsion, constant breadth curve, Walker 3-manifolds.} 
\end{small}\\
\maketitle

\section{Introduction}  
The study of curves of constant breadth were defined first in 1778 by Euler. Then, Solow \cite{Solow} investigated the curves of constant breadth. Kose, Magden and Yilmaz in \cite{Kose, Magden} studied plane curves of
constant breadth in Euclidean spaces $\mathbb{E}^3$ and $\mathbb{E}^4$.
Fujiwara \cite{Fuj7} defined constant breadth for space curves and obtained a problem
to determine whether there exists space curve of constant breadth or not. Furthermore, Blaschke \cite{Blas2} defined the curves of constant breadth on a sphere. In \cite{Altunkaya2}, Altunkaya et al.  defined null curves of constant breadth in Minkowski 4-space and obtain a characterization of these curves. Also Altunkaya et al. in \cite{Altunkaya1}  investigate constant breadth curves on a surface according to Darboux frame and give some characterizations of these curves.\\
Motivated by the above papers, we investigate the geometries of curves of constant breadth according to Darboux frame in a Strict Walker 3-manifold which is  a Lorentzian three-manifold admitting a parallel null vector field. It is known that Walker metrics have served as a powerful tool of constructing
interesting indefinite metrics which exhibit various aspects of geometric properties not given by any positive definite metrics. For more details about Walker 3-manifold see \cite{ Calvaruso, Diallo2, Gning}.

\section{Preliminaries}
A Walker $n$-manifold is a pseudo-Riemannian manifold, which admits 
a field of null parallel $r$-planes, with $r\leq\frac{n}{2}$. The 
canonical forms of the metrics were investigated by A. G. Walker 
(\cite{Brozos}). Walker  has derived adapted coordinates to a parallel 
plan field. Hence, the metric of a three-dimensional Walker manifold 
$(M, g_f^\epsilon)$ with coordinates $(x, y, z)$ is expressed as
\begin{eqnarray}\label{2.1}
g^\epsilon_f=dx\circ dz+\epsilon dy^2+f(x, y, z)dz^2
\end{eqnarray}
and its matrix form as 
\begin{eqnarray*}
g^{\epsilon}_{f}=\left(
\begin{array}{ccc}
0 & 0 & 1 \\
0 & \epsilon & 0  \\
1 & 0 & f
\end{array}\right)\,\,\,\, 
\text{with inverse}\,\,\, \, 
(g^{\epsilon}_{f})^{-1}=\left(
\begin{array}{ccc}
-f & 0 & 1 \\
0 & \epsilon & 0  \\
1 & 0 & 0 
\end{array}\right)
\end{eqnarray*}
for some function $f(x,y,z)$, where $\epsilon=\pm 1$ and thus 
$D = \mathrm{Span}{\partial_x}$ as the parallel degenerate line field.
Notice that when $\epsilon=1$ and $\epsilon=-1$ the Walker 
manifold has signature $(2,1)$ and $(1,2)$ respectively, and 
therefore is Lorentzian in both cases. In this study we take $\epsilon=1$. \\

It follows after a straightforward calculation that the Levi-Civita 
connection of any metric (\ref{2.1}) is given by:
\begin{eqnarray}\label{2.2}
\nabla_{\partial_x}\partial z &=& \frac{1}{2}f_x\partial_x, \quad 
\nabla_{\partial_y}\partial z=\frac{1}{2}f_y\partial_x,\nonumber\\
\nabla_{\partial_z}\partial z &=& \frac{1}{2}(ff_x+f_z)\partial_x
+ \frac{1}{2}f_y\partial_y-\frac{1}{2}f_x\partial_z
\end{eqnarray}
where $\partial_x$, $\partial_y$ and $\partial_z$ are the coordinate 
vector fields $\frac{\partial}{\partial_x}$, $\frac{\partial}{\partial_y}$ 
and $\frac{\partial}{\partial_z}$, respectively. Hence, if 
$(M, g_f^\epsilon)$ is a strict Walker manifolds i.e., $f(x,y,z)=f(y,z)$, 
then the associated Levi-Civita connection satisfies
\begin{eqnarray}
\nabla_{\partial_y}\partial z=\frac{1}{2}f_y\partial_x, \,\,\,\, \nabla_{\partial_z}\partial z=\frac{1}{2}f_z\partial_x-\frac{1}{2}f_y\partial_y.
\end{eqnarray}
Note that the existence of a null parallel vector field (i.e $f=f(y,z)$) 
simplifies the non-zero components of the Christoffel symbols and the 
curvature tensor of the metric $g^\epsilon_f$ as follows:
\begin{eqnarray}\label{2.5}
\Gamma_{23}^1 =\Gamma_{32}^1=\frac{1}{2}f_y,\;
\Gamma_{33}^1 = \frac{1}{2}f_z,\;
\Gamma_{33}^2 =-\frac{1}{2}f_y
\end{eqnarray} 

Let now $u$ and $v$ be two vectors in $M$. Denoted by $(\vec{i}, \vec{j}, \vec{k})$ 
the canonical frame in $\mathbb{R}^3$.\\
The vector product of $u$ and $v$ 
in $(M, g_f^\epsilon)$ with respect to the metric $g_f^\epsilon$ is the 
vector denoted by $u\times v$ in $M$ defined by
\begin{eqnarray}\label{2.7}
g_f^\epsilon( u\times v, w)=\det(u, v, w) 
\end{eqnarray}
for all vector $w$ in $M$, where $\det(u, v, w) $ is the determinant function associated to the canonical basis of $\mathbb{R}^3$. If $u=(u_1, u_2, u_3)$ 
and $v=(v_1, v_2, v_3)$ then by using (\ref{2.7}), we have:
\begin{eqnarray}\label{21}
u\times v= \left(
\left\vert\begin{matrix}
    u_1 & v_1\\
    u_2 & v_2
  \end{matrix}\right\vert-f
\left\vert\begin{matrix}
    u_2 & v_2 \\
    u_3 & v_3
  \end{matrix}\right\vert\right)\vec{i}
  -\epsilon\left\vert\begin{matrix}
    u_1 & v_1 \\
    u_3 & v_3
  \end{matrix}\right\vert \vec{j}+
  \left\vert\begin{matrix}
    u_2 & v_2 \\
    u_3 & v_3
  \end{matrix}\right\vert \vec{k}
\end{eqnarray}
\section{Darboux equations in Walker 3-manifold}
Let $\alpha: I\subset\mathbb{R}\longrightarrow (M, g_f^\epsilon)$ be a curve parametrized by its arc-length $s$.
The Frenet frame of $\alpha$ is the vectors $T$, $N$ and $B$ along $\alpha$ where $T$ is the tangent, $N$ the principal normal and $B$ the binormal vector. They satisfied the Frenet formulas
\begin{eqnarray}\label{FF}
{\left\{ \begin{array}{ccc}
  \nabla_TT(s) &=& \epsilon_2\kappa(s)N(s) \\
 \nabla_TN(s) &=& -\epsilon_1\kappa T(s)-\epsilon_3\tau B(s) \\
  \nabla_TB(s) &=& \epsilon_2\tau(s) N(s)
  \end{array}\right.}
\end{eqnarray}
where $\kappa$ and $\tau$ are respectively the curvature and the torsion of the curve $\alpha$, with $\epsilon_1=g_f(T;T); \epsilon_2=g_f(N;N)$ and $\epsilon_3=g_f(B,B)$.\\
Starting from local coordinates $(x, y, z)$ for which (\ref{2.1}) holds, it is easy to check that
\begin{eqnarray*}
e_1=\partial_y, \,\,\, e_2=\frac{2-f}{2\sqrt{2}}\partial_x+\frac{1}{\sqrt{2}}\partial_z, \,\,\, e_3=\frac{2+f}{2\sqrt{2}}\partial_x-\frac{1}{\sqrt{2}}\partial_z
\end{eqnarray*}
are local pseudo-orthonormal frame fields on $(M, g_f^\epsilon)$, with $g_f^\epsilon(e_1, e_1)=\epsilon$, $g_f^\epsilon(e_2, e_2)=1$ and $g_f^\epsilon(e_3, e_3)=-1$.
Thus the  signature of the metric $g_f^\epsilon$ is $(1, \epsilon, -1)$. If we choose $\epsilon=1$ then,  pseudo-orthonormal frame is formed by two spacelike vectors and one timelike vector and If we choose $\epsilon=-1$ then,  pseudo-orthonormal frame is formed by one spacelike vector and two timelike vectors. For both cases we obtain Lorentzian manifold. In this work we assume that $\epsilon=1$ \\
Now we suppose that the curve $\alpha$ lies on a timelike surface $S$ in $M$. Let $U$ be the unit normal vector of $S$, then the Darboux frame is given by $\lbrace T, Y, U\rbrace$, where $T$ is the tangent vector of the curve $\alpha(s)$ and $Y=U\times T$. \\
\textbf{Case 1:} Let $\alpha$ be timelike curve. Then the tangent vector $T$ is timelike ($\epsilon_1=-1$), the normal vector $N$ and the binormal vector $B$ are spacelike, that is ($\epsilon_2=\epsilon_3=1$).\\
Since $S$ is timelike, the unit normal vector $U$ is spacelike and so $Y$ becomes spacelike.
The usual transformations between the Walker Frenet frame and the Darboux takes the form
\begin{eqnarray}
Y=\cos\theta N+\sin\theta B\\
U=-\sin\theta N+\cos\theta B,
\end{eqnarray}
where $\theta$ is an angle between the vector $Y$ and the vector $N$.\\
Derivating $Y$ along the curve alpha we get 
\begin{eqnarray*}
\nabla_TY=\cos\theta\nabla_TN-\theta'\sin\theta N+\sin\theta\nabla_TB+\theta'\cos\theta B.
\end{eqnarray*}
Using the Frenet equation in (2.7) we have
\begin{eqnarray*}
\nabla_TY=\cos\theta(\kappa T-\epsilon_3\tau B)-\theta'\sin\theta N+\sin\theta(\epsilon_2\tau N)+\theta'\cos\theta B.
\end{eqnarray*}
Now we suppose that the principal normal and the binormal have the same sign. then we get
\begin{eqnarray}
\nabla_TY=\kappa\cos\theta T+(\theta'-\tau)U
\end{eqnarray}
The same calculus gives 
\begin{eqnarray}
\nabla_TU=-\kappa\sin\theta T-(\theta'-\tau)Y.
\end{eqnarray}
Then the Walker Darboux equation is expressed as
\begin{eqnarray}\label{DF1}
{\left\{ \begin{array}{ccc}
\nabla_TT=\kappa_g Y+\kappa_n U\\
\nabla_TY=\kappa_g T+\tau_gU\\
\nabla_TU=\kappa_n T-\tau_gY,
 \end{array}\right.}
\end{eqnarray}
where $\kappa_g$, $\kappa_n$ and $\tau_g$ are the geodesic curvature, normal curvature and geodesic torsion of $\alpha(s)$ on $S$, respectively. Also, (\ref{DF1}) gives
\begin{eqnarray}\label{eqf}
g^\epsilon_f\left(\nabla_TY,U\right)=\tau_g=\theta'-\tau, \,\,\,\,\,\,\,\,\\
g^\epsilon_f\left(\nabla_TT,Y\right)=\kappa_g=\kappa\cos\theta,  \,\,\,\,\,\,\,\,\\
g^\epsilon_f\left(\nabla_TT,U\right)=\kappa_n=-\kappa\sin\theta.
\end{eqnarray}
\textbf{Case 2:} Let $\alpha$ be spacelike curve. Then the tangent vector $T$ is spacelike ($\epsilon_1=1$), the normal vector $N$ is spacelike ($\epsilon_2=1$) and the binormal vector $B$ is timelike ($\epsilon_3=-1$) or normal vector $N$ is timelike ($\epsilon_2=-1$) and the binormal vector $B$ is spacelike ($\epsilon_3=1$). So we have two following subcases:\\
\textbf{i)}: $\epsilon_2=1$ and $\epsilon_3=-1$.\\
Then the usual transformations between the Walker Frenet frame and the Darboux takes the form
\begin{eqnarray}
Y=\cosh{\theta} N+\sinh{\theta} B\\
U=\sinh{\theta} N+\cosh{\theta} B,
\end{eqnarray}
where $\theta$ is an angle between the vector $Y$ and the vector $N$.\\
Since $\nabla_TT=\kappa N$, we have
\begin{eqnarray}
 \nabla_TT =-\kappa\sinh{\theta} Y+\kappa\cosh{\theta} U.
\end{eqnarray}
Derivating $Y$ along the curve alpha we get 
\begin{eqnarray}
\nabla_TY=-\kappa\sinh{\theta} T+(\theta'+\tau)U
\end{eqnarray}
The same calculus gives 
\begin{eqnarray}
\nabla_TU=-\kappa\cosh{\theta} T+(\theta'+\tau)Y.
\end{eqnarray}
Then the Walker Darboux equation is expressed as
\begin{eqnarray}\label{DF2}
{\left\{ \begin{array}{ccc}
\nabla_TT=-\kappa_g Y+\kappa_n U\\
\nabla_TY=-\kappa_g T+\tau_gU\\
\nabla_TU=-\kappa_n T+\tau_gY,
 \end{array}\right.}
\end{eqnarray}
where $\kappa_g$, $\kappa_n$ and $\tau_g$ are the geodesic curvature, normal curvature and geodesic torsion of $\alpha(s)$ on $S$, respectively. Also, (\ref{DF2}) gives
\begin{eqnarray}\label{eqf}
g^\epsilon_f\left(\nabla_TY,U\right)=\tau_g=\theta'+\tau, \,\,\,\,\,\,\,\,\\
g^\epsilon_f\left(\nabla_TT,Y\right)=\kappa_g=\kappa\sinh{\theta}, \\ \,\,\,\,\,\,\,\,g^\epsilon_f\left(\nabla_TT,U\right)=\kappa_n=\kappa\cosh{\theta}.
\end{eqnarray}
\textbf{ii)}: $\epsilon_2=-1$ and $\epsilon_3=1$.\\
Then the usual transformations between the Walker Frenet frame and the Darboux takes the form
\begin{eqnarray}
Y=\sinh{\theta} N+\cosh{\theta} B\\
U=\cosh{\theta} N+\sinh{\theta} B,
\end{eqnarray}
where $\theta$ is an angle between the vector $Y$ and the vector $N$.\\
Since $\nabla_TT=-\kappa N$, we have 
\begin{eqnarray}
  \nabla_TT =-\kappa\cosh{\theta} Y+\kappa\sinh{\theta} U.
\end{eqnarray}
Derivating $Y$ with respect to $s$ we get 
\begin{eqnarray}
\nabla_TY=-\kappa\cosh{\theta} T+(\theta'-\tau)U
\end{eqnarray}
Derivating $Y$ with respect to $s$ alpha we get
\begin{eqnarray}
\nabla_TU=-\kappa\sinh{\theta} T+(\theta'-\tau)Y.
\end{eqnarray}
Then the Walker Darboux equation is expressed as
\begin{eqnarray}\label{DF3}
{\left\{ \begin{array}{ccc}
\nabla_TT=-\kappa_g Y+\kappa_n U\\
\nabla_TY=-\kappa_g T+\tau_gU\\
\nabla_TU=-\kappa_n T+\tau_gY,
 \end{array}\right.}
\end{eqnarray}
where $\kappa_g$, $\kappa_n$ and $\tau_g$ are the geodesic curvature, normal curvature and geodesic torsion of $\alpha(s)$ on $S$, respectively. Also, (\ref{DF3}) gives
\begin{eqnarray}\label{eqf}
g^\epsilon_f\left(\nabla_TY,U\right)=\tau_g=\theta'-\tau, \,\,\,\,\,\,\,\,\\
g^\epsilon_f\left(\nabla_TT,Y\right)=\kappa_g=\kappa\cosh{\theta}, \\ \,\,\,\,\,\,\,\,g^\epsilon_f\left(\nabla_TT,U\right)=\kappa_n=\kappa\sinh{\theta}.
\end{eqnarray}

\section{Space curves of constant breadth  According to Darboux
Frame in Walker manifold}
In this section, we define space curves of constant breadth in the three
dimensional Walker manifold.
\begin{definition}
  A curve $\alpha: I\rightarrow (M, g_f^\epsilon)$ in the three-dimensional Walker manifold $ (M, g_f^\epsilon)$
is called a curve of constant breadth if there exists a curve $\beta: I\rightarrow M_f$ such
that, at the corresponding points of curves, the parallel tangent vectors of $\alpha$ and $\beta$
 at $\alpha(s)$ and $\beta(s^{\star})$ at $s; s^{\star} \in I$ are opposite directions and the distance  $g_f^\epsilon(\beta-\alpha, \beta-\alpha)$ is  constant. In this case, $(\alpha; \beta)$ is called a  pair curve of constant breadth.
\end{definition}
Let now $(\alpha; \beta)$ be a  pair of unit speed curves of constant breadth and $s, s^{\star}$ be arc-length
of $\alpha$ and $\beta$, respectively. \\
We suppose that the curve $\alpha$ lies on a timelike surface in $M_f$, then it has Darboux frame in addition to Frenet frame. Then we may write the following equation:
\begin{equation}\label{s3eq1}
  \beta(s^{\star}) = \alpha(s) + m_1(s)T(s) + m_2(s)Y(s) + m_3(s)U(s);
\end{equation}
where $ m_i (i=1,2,3)$ are smooth functions of $s$.\\
\subsection{ Case where $\alpha$ is timelike.}
 Differentiating (\ref{s3eq1}) with respect to $s$ and using (\ref{DF1}) we obtain
\begin{eqnarray}\label{s3eq2}
  \frac{d\beta}{ds}&=&\frac{d\beta}{ds^{\star}}\frac{ds^{\star}}{ds}\nonumber\\
  &=&T^{\star}(s^\star)\frac{ds^{\star}}{ds}
  =(1+m'_1+m_2 \kappa_g+m_3\kappa_n)T(s)\nonumber\\
  && +(m'_2 + m_1\kappa_g - m_3\tau_g)Y(s)\nonumber\\
  &&+(m'_3+m_2\tau_g+m_1\kappa_n)U(s),
\end{eqnarray}
where $T^{\star}$ denotes the unit tangent vector of $\beta$.\\
Since $T=-T^*$, from the equations in (\ref{s3eq2}) we have
\begin{equation}\label{s3eq4}
    {\left\{ \begin{array}{ccc}
m'_1&=&-m_2 \kappa_g-m_3\kappa_n-h(s) \\
  m'_2 &=&- m_1\kappa_g + m_3\tau_g  \\
 m'_3&=&-m_2\tau_g-m_1\kappa_n,
\end{array}\right.}
\end{equation}
where $h(s)=\frac{ds^{\star}}{ds}+1$.
We assume that $(\alpha, \beta)$ is a curve pair of constant breadth. Since $\alpha$ is a timelike curve and the vectors $Y$ and $U$ are spacelike vectors, we have
\begin{eqnarray}
  \Vert\beta-\alpha\Vert=-m_1^2+m_2^2+m_3^2=constant,
\end{eqnarray}
which imlplies that
\begin{equation}\label{s3eq5}
  -m_1\frac{dm_1}{ds}+ m_2\frac{dm_2}{ds}+ m_3\frac{dm_3}{ds}=0.
\end{equation}
If we combine (\ref{s3eq4}) and (\ref{s3eq5}), we get
\begin{eqnarray}\label{s3eq6}
  m_1h(s)=0.
\end{eqnarray}
If $\alpha$ and $\beta$ are curves of constant breadth then $m_1=0$ or $h(s)=0$. If $m_1\neq 0$ (that is $h(s)=0$) then $d=m_1T(s)+m_2Y(s)+m_3U(s)$ becomes a constant vector. So $\beta(s^*)$ is a translation of $\alpha$ along the constant vector $d$. Also $h(s)=0$ gives $s^*=-s+c$, where $c$ is constant.\\
Now, we investigate curves of constant breadth for $m_1\neq 0$ or $m_1=0$ in some special case.

\subsubsection{ Case (For geodesic curves)}
Let $\alpha$ be non-straight line geodesic curve on a timelike surface. Then $\kappa_g = \kappa\cos\theta = 0$. As $\kappa\neq 0$, we get $\cos\theta = 0$. So it implies that $\kappa_n = -\kappa$, $\tau_g=-\tau$. From (\ref{s3eq4}), we have
following differential equation system
\begin{equation}\label{s3eq7}
  {\left\{ \begin{array}{ccc}
    m'_1&=& m_3\kappa-h(s)  \\
    m'_2 &=&-m_3\tau \\
   m'_3&=& m_1\kappa+m_2\tau.
\end{array}\right.}
\end{equation}
By using (\ref{s3eq7}), we obtain the following differential equation.
\begin{equation}\label{s3eq9}
\frac{1}{\kappa}\left(\frac{1}{\kappa}(m'_1+h)\right)''+\left[\left(\frac{1}{\kappa}\right)'-\frac{1}{\tau}\left(\frac{\tau}{\kappa}\right)'\right]\left(\frac{1}{\kappa}(m'_1+h)\right)'+\left(\frac{\tau}{\kappa}\right)^2(m'_1+h)+\left(\frac{\tau}{\kappa}\right)'\frac{\kappa}{\tau}m_1-m'_1=0.
\end{equation}
\textbf{Subcase 1:} $m_1\neq 0$ ($h(s)=0)$.\\
If we write $h(s)=0$ in equation (\ref{s3eq9}), we have.
\begin{equation}\label{s3eq10}
\frac{1}{\kappa}\left(\frac{1}{\kappa}m'_1\right)''+\left[\left(\frac{1}{\kappa}\right)'-\frac{1}{\tau}\left(\frac{\tau}{\kappa}\right)'\right]\left(\frac{1}{\kappa}m'_1\right)'+\left[\left(\frac{\tau}{\kappa}\right)^2-1\right]m'_1+\left(\frac{\tau}{\kappa}\right)'\frac{\kappa}{\tau}m_1=0.
\end{equation}
\begin{theorem} Let $\alpha$ be a timelike geodesic curve lying a timelike surface in $M$ and let $(\alpha, \beta)$ be a  pair of unit speed curves of constant breadth. If $m_1$ is a non-zero constant then $\alpha$ is a
general helix in the three dimensional Walker manifold $(M, g_f^\epsilon)$. Also the curve $\beta$ is given as:  
\begin{equation}\label{s3eq11}
  \beta(s^{\star}) = \alpha(s) +m_1T(s)+ m_2Y(s) 
\end{equation}
where $m_2$ is a real constant and $s^*=-s+c$.
\end{theorem}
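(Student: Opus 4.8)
The plan is to work directly from the first-order system (\ref{s3eq7}) rather than from the third-order equation (\ref{s3eq10}), because the hypotheses make the system collapse almost immediately and no genuine differential equation survives. I am in Subcase 1, so $h(s)=0$, and I am given that $m_1$ is a nonzero constant, hence $m_1'=0$. The geodesic reductions $\kappa_g=0$, $\kappa_n=-\kappa$, $\tau_g=-\tau$ are already baked into (\ref{s3eq7}), so I can use that system as-is.

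First I would read off $m_3$ from the first equation of (\ref{s3eq7}). With $h=0$ it becomes $m_1'=m_3\kappa$, so $m_3\kappa=0$; since $\alpha$ is a non-straight geodesic we have $\kappa\neq 0$, which forces $m_3=0$. Substituting $m_3=0$ into the second equation gives $m_2'=-m_3\tau=0$, so $m_2$ is also a constant. Substituting $m_3=0$ (hence $m_3'=0$) into the third equation yields the purely algebraic relation $m_1\kappa+m_2\tau=0$ between the two constants $m_1,m_2$ and the invariants $\kappa,\tau$.

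Next I would extract the helix property from this relation. If $m_2=0$ then $m_1\kappa=0$, which is impossible since $m_1\neq 0$ and $\kappa\neq 0$; hence $m_2\neq 0$ and $\tau/\kappa=-m_1/m_2$ is a finite, nonzero constant. By the Lancret-type characterization (a curve is a general helix precisely when $\tau/\kappa$ is constant), this gives the first assertion that $\alpha$ is a general helix in $(M,g_f^\epsilon)$.

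Finally, the description of $\beta$ is just bookkeeping: inserting $m_3=0$ into (\ref{s3eq1}) leaves $\beta(s^{\star})=\alpha(s)+m_1T(s)+m_2Y(s)$ with $m_1,m_2$ constant, and $h(s)=0$ means $\tfrac{ds^{\star}}{ds}=-1$, which integrates to $s^{\star}=-s+c$. The only step requiring care is converting the constant-coefficient relation $m_1\kappa+m_2\tau=0$ into a statement about the intrinsic ratio $\tau/\kappa$, in particular ruling out $m_2=0$; but this is a one-line argument. I expect no analytic obstacle, since the constancy of $m_1$ eliminates the differential equation and reduces everything to linear algebra in the Darboux coefficients.
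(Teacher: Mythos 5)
Your proof is correct, and at the key step it takes a genuinely more elementary route than the paper's. The paper obtains the helix property by substituting $m_1'=0$ into the third-order equation (\ref{s3eq10}), where the only surviving term $\left(\frac{\tau}{\kappa}\right)'\frac{\kappa}{\tau}m_1=0$ forces $\left(\frac{\tau}{\kappa}\right)'=0$; it then reads $m_3=0$ and $m_2=\mathrm{const}$ off the first two equations of (\ref{s3eq7}), exactly as you do. You never touch (\ref{s3eq10}): having forced $m_3\equiv 0$ from $m_1'=0$, $h=0$, $\kappa\neq 0$, you use the third equation of (\ref{s3eq7}) as the algebraic identity $m_1\kappa+m_2\tau=0$, rule out $m_2=0$ (it would give $m_1\kappa=0$), and conclude $\tau/\kappa=-m_1/m_2$ is constant. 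Your route buys several things: it avoids relying on the derived equation (\ref{s3eq10}), whose derivation the paper does not display and which implicitly presupposes $\tau\neq 0$ (it contains $\frac{1}{\tau}$ and $\frac{\kappa}{\tau}$), whereas you obtain $\tau\neq 0$ and $m_2\neq 0$ as conclusions rather than assumptions; and it yields the explicit helix constant $c_0=\tau/\kappa=-m_1/m_2$, sharpening the statement that $m_2$ is merely ``a real constant.'' What the paper's route buys is uniformity: (\ref{s3eq10}) is the equation genuinely needed for the non-constant-$m_1$ case (the subsequent theorem with $\dddot{m}_1+(c_0^2-1)\dot{m}_1=0$), so its proof here is a degenerate instance of machinery it reuses there. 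Both arguments invoke the same hypotheses ($\kappa\neq 0$ from non-straightness, $h=0$ from $m_1h=0$ with $m_1\neq 0$) and both close identically, setting $m_3=0$ in (\ref{s3eq1}) and integrating $\frac{ds^{\star}}{ds}=-1$ to get $s^{\star}=-s+c$.
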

\begin{proof}
If $m_1$ is non zero constant, then from (\ref{s3eq10}) we obtain that $\left(\frac{\tau}{\kappa}\right)'=0$. So $\alpha$ is a general helix. Also from the first and second equations of (\ref{s3eq7}) we get $m_3=0$ and $m_2$ is a real constant, respectively.
\end{proof}
\begin{theorem}
Let $\alpha$ be a timelike geodesic curve and a general helix lying a timelike surface in $M$. Let $(\alpha, \beta)$ be a pair of unit speed curves of constant breadth. If $m_1$ is not zero, then the curve $\beta$ can be expressed as one of the following cases:
\begin{eqnarray}\label{s3eq15}
  \beta(s^*)=\alpha(s)+m_1T(s)+\frac{1}{c_0}(\Ddot{m}_1-m_1)Y(s)+\Dot{m}_1U(s)
\end{eqnarray}
where 
\begin{itemize} 
    \item[i)]  $m_1=\frac{1}{\sqrt{c_0^2-1}}\left(a_1\sin(\sqrt{c_0^2-1}z)-a_2\cos(\sqrt{c_0^2-1}z)\right)+a_3$, \,\, $c_0^2-1>0$ \\
    \item[ii)] $m_1=\frac{a_1}{2}z^2+a_2z+a_3$, \,\, $c_0^2-1=0$\\
   \item[iii)] $m_1=\frac{1}{\sqrt{1-c_0^2}}\left(a_1\sinh(\sqrt{1-c_0^2}z)+a_2\cosh(\sqrt{1-c_0^2}z)\right)+a_3$, \,\, $c_0^2-1<0$
\end{itemize}
where $z=\int\kappa ds$ and $a_1, a_2, a_3$ are real constants.
\end{theorem}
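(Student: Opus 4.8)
The plan is to use both hypotheses---that $\alpha$ is geodesic and that $\alpha$ is a general helix---to collapse the first-order system (\ref{s3eq7}) into a single linear ODE for $m_1$, integrate it explicitly, and then recover $m_2$ and $m_3$. First I would note that since $m_1\neq 0$, the constant-breadth relation (\ref{s3eq6}) forces $h(s)=0$; feeding this into (\ref{s3eq7}) leaves the homogeneous system
\begin{equation*}
m'_1=m_3\kappa,\qquad m'_2=-m_3\tau,\qquad m'_3=m_1\kappa+m_2\tau .
\end{equation*}
The general-helix hypothesis means the ratio $c_0:=\tau/\kappa$ is a (nonzero) constant, so $\tau=c_0\kappa$ identically; this is the fact that makes the whole system integrable in closed form.

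The key device is the reparametrization $z=\int\kappa\,ds$, under which $\frac{d}{ds}=\kappa\frac{d}{dz}$, so that writing $\dot{}=d/dz$ the factor $\kappa$ cancels uniformly. The first equation then gives $m_3=\dot{m}_1$, and the third gives $\ddot{m}_1=m_1+c_0 m_2$, i.e. $m_2=\frac{1}{c_0}(\ddot{m}_1-m_1)$; these already match the coefficients of $Y$ and $U$ in the claimed expression (\ref{s3eq15}). Differentiating the formula for $m_2$ and comparing with the second equation (which in the new variable reads $\dot{m}_2=-c_0\dot{m}_1$) produces the constant-coefficient equation
\begin{equation*}
\dddot{m}_1=(1-c_0^2)\,\dot{m}_1 .
\end{equation*}

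It then remains only to integrate this ODE, and the single genuine subtlety is that its character is governed by the sign of $1-c_0^2$. Setting $p=\dot{m}_1$ reduces it to $\ddot{p}=(1-c_0^2)p$, whose solutions are trigonometric when $c_0^2-1>0$, affine when $c_0^2-1=0$, and hyperbolic when $c_0^2-1<0$; integrating $p$ once more and absorbing the integration constants into $a_1,a_2,a_3$ yields precisely the three formulas (i)--(iii) for $m_1$ with $z=\int\kappa\,ds$. Substituting the resulting $m_1$, together with $m_2=\frac{1}{c_0}(\ddot{m}_1-m_1)$ and $m_3=\dot{m}_1$, back into (\ref{s3eq1}) gives the stated form of $\beta$. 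I expect no real obstacle beyond bookkeeping: once the $z$-substitution linearizes and decouples the system, the remaining work is the routine case analysis of a second-order constant-coefficient equation, the only point requiring care being that the integration constants be rescaled so the normalizing factors $\tfrac{1}{\sqrt{c_0^2-1}}$ and $\tfrac{1}{\sqrt{1-c_0^2}}$ emerge exactly as written.
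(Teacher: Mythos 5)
Your proposal is correct and follows essentially the same route as the paper: with $m_1\neq 0$ forcing $h=0$, you eliminate $m_2,m_3$ from the system (\ref{s3eq7}), use $\tau/\kappa=c_0$ together with the substitution $z=\int\kappa\,ds$ to arrive at the same constant-coefficient equation $\dddot{m}_1+(c_0^2-1)\dot{m}_1=0$, solve by the sign trichotomy on $c_0^2-1$, and recover $m_3=\dot{m}_1$ and $m_2=\tfrac{1}{c_0}(\ddot{m}_1-m_1)$ exactly as in the paper. The only cosmetic difference is order of operations: you reparametrize by $z$ before eliminating, while the paper first specializes its general third-order ODE (\ref{s3eq10}) to the helix case and then changes variables.
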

\begin{proof}
Let us consider that $\alpha$ is timelike geodesic curve and a general helix in Wlaker 3-manifold. Then we have $\frac{\tau}{\kappa}=c_0=constant$. From (\ref{s3eq10}), we have 
\begin{equation}\label{s3eq13}
\left(\frac{1}{\kappa}\left(\frac{1}{\kappa}m'_1\right)'\right)'+\left(c_0^2-1\right)m'_1=0.
\end{equation}
By means of changing of the independant variable $s$ with $z=\int\kappa ds$, from (\ref{s3eq13}) we obtain
$$m'_1=\frac{dm_1}{ds}=\frac{dm_1}{dz}\frac{dz}{ds}=\Dot{m}_1\kappa.$$
\begin{eqnarray}\label{s3eq14}
  \dddot{m}_1+(c_0^2-1)\Dot{m}_1=0.
\end{eqnarray}
If we solve this equation we get 
\begin{displaymath}
m_1={\left\{ \begin{array}{ccc}
\frac{1}{\sqrt{c_0^2-1}}\left(a_1\sin(\sqrt{c_0^2-1}z)-a_2\cos(\sqrt{c_0^2-1}z)\right)+a_3, \,\,\text{if} \,\,c_0^2-1>0 \\
\frac{a_1}{2}z^2+a_2z+a_2, \,\,\text{if} \,\,c_0^2-1=0\\
\frac{1}{\sqrt{1-c_0^2}}\left(a_1\sinh(\sqrt{1-c_0^2}z)+a_2\cosh(\sqrt{1-c_0^2}z)\right)+a_3, \,\,\text{if} \,\,c_0^2-1<0.
\end{array}\right.}
\end{displaymath}
From (\ref{s3eq7}) we obtain $m_3=\Dot{m}_1$ and $m_2=\frac{1}{c_0}(\Ddot{m}_1-m_1)$. 
\end{proof}
\textbf{Subcase 2: $m_1=0$.}\\
If we take $m_1=0$ in the equation  (\ref{s3eq7}), we get 
\begin{equation}\label{s3eq16}
  {\left\{ \begin{array}{ccc}
    h(s)&=& m_3\kappa   \\
    m'_2 &=&-m_3\tau\\
   m'_3&=& m_2\tau.
\end{array}\right.}
\end{equation}
Since $m_3=\frac{h}{\kappa}$, $m_2=\frac{1}{\tau}m'_3=\frac{1}{\tau}\left(\frac{h}{\kappa}\right)'$, we get 
\begin{eqnarray}\label{s3eq17}
  \left[\frac{1}{\tau}\left(\frac{h}{\kappa}\right)'\right]'+\left(\frac{h}{\kappa}\right)\tau=0.
\end{eqnarray}
If we put $y=\frac{h}{\kappa}$, the equation (\ref{s3eq17}) becomes
\begin{eqnarray}\label{s3eq18}
  y''-\frac{\tau'}{\tau}y'+\tau^2y=0.
\end{eqnarray}
For solving the equation (\ref{s3eq18}), we put the new variable $\frac{dw}{ds}=\tau.$ Then
\begin{eqnarray}\label{s3eq19}
  {\left\{ \begin{array}{cc}
y'=\frac{dy}{dw}\frac{dw}{ds}=\Dot{y}\tau  \\
y''=\frac{d^2y}{dw^2}\tau^2+\frac{dy}{dw}\tau'
\end{array}\right.}
\end{eqnarray}
If we put the equation (\ref{s3eq19}) in the equation (\ref{s3eq18})
we obtain
\begin{equation}\label{s3eq20}
\frac{d^2y}{dw^2}+y=0.
\end{equation}
and the solution of (\ref{s3eq20}) is $y=b_1\cos w+b_2\sin w$. Then we have 
\begin{eqnarray}\label{s3eq21}
  h(s)=\kappa\left[b_1\cos\left(\int\tau ds\right)+b_2\sin\left(\int\tau ds\right)\right]\\
  m_2=\frac{h}{\kappa}=b_1\cos\left(\int\tau ds\right)+b_2\sin\left(\int\tau ds\right)\\
  m_3=\frac{1}{\tau}\left(\frac{h}{\kappa}\right)'=-b_1\sin\left(\int\tau ds\right)+b_2\cos\left(\int\tau ds\right).
\end{eqnarray}
So we give the following theorem
\begin{theorem}
Let $\left(\alpha, \beta\right)$ be a pair of constant breadth curve in $(M, g_f)$ where $\alpha$ is
a timelike geodesic curve lying in a timelike surface in $M$. If $m_1=0$, then the curve $\beta$ is given by

\begin{displaymath}
  \beta(s^*) = \alpha(s) + \left[b_1\cos\left(\int\tau ds\right)+b_2\sin\left(\int\tau ds\right)\right]Y(s)+\left[-b_1\sin\left(\int\tau ds\right)\\
  +b_2\cos\left(\int\tau ds\right)\right]U(s).
\end{displaymath}
\end{theorem}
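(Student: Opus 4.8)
The plan is to obtain the explicit form of $\beta$ by determining the coefficient functions $m_2$ and $m_3$ in the Darboux representation (\ref{s3eq1}) under the two standing hypotheses, that $\alpha$ is a timelike geodesic and that $m_1=0$. First I would record the geodesic reduction: $\kappa_g=\kappa\cos\theta=0$ with $\kappa\neq0$ forces $\cos\theta=0$, whence $\kappa_n=-\kappa$ and $\tau_g=-\tau$, so the general constant-breadth system (\ref{s3eq4}) specializes to (\ref{s3eq7}). Imposing $m_1=0$ kills the tangential equation and reduces (\ref{s3eq7}) to the three scalar relations (\ref{s3eq16}): the algebraic constraint $h=m_3\kappa$ together with the coupled pair $m_2'=-m_3\tau$, $m_3'=m_2\tau$.

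The substantive step is to decouple this pair into a single solvable ODE. Using (\ref{s3eq16}) I would express $m_3=h/\kappa$ and then $m_2$ in terms of $(h/\kappa)'$, and substitute back to reach the second-order equation (\ref{s3eq17}), equivalently (\ref{s3eq18}), for the single unknown $y=h/\kappa$. The one genuinely nonroutine point is that (\ref{s3eq18}) has variable coefficients and is not directly integrable; the device that unlocks it is the reparametrization $dw/ds=\tau$ of (\ref{s3eq19}), under which the middle first-derivative term cancels identically and the equation collapses to the constant-coefficient harmonic oscillator (\ref{s3eq20}), $d^2y/dw^2+y=0$. I expect verifying that cancellation, namely that the $\tau'$ contributions coming from $y''$ and from the term $-(\tau'/\tau)y'$ exactly annihilate one another, to be the main thing to check with care.

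Everything after that is assembly. The oscillator has general solution $y=b_1\cos w+b_2\sin w$ with $w=\int\tau\,ds$; differentiating and undoing the substitutions recovers $m_2$ and $m_3$ as the two phase-shifted trigonometric functions displayed in (\ref{s3eq21}). Finally, inserting $m_1=0$ together with these expressions for $m_2$ and $m_3$ into (\ref{s3eq1}) makes the tangential term vanish and leaves exactly the stated formula for $\beta(s^*)$, which completes the proof.
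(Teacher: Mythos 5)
Your proposal is correct and follows essentially the same route as the paper: the same reduction to the system (\ref{s3eq16}), the same substitution $y=h/\kappa$ leading to (\ref{s3eq17})--(\ref{s3eq18}), the same reparametrization $dw/ds=\tau$ collapsing it to the harmonic oscillator (\ref{s3eq20}), and the same assembly of $m_2$, $m_3$ into (\ref{s3eq1}). The only point of divergence is a bookkeeping quirk in the paper itself: (\ref{s3eq16}) identifies $m_3=h/\kappa$ while the paper's final display (\ref{s3eq21}) assigns $h/\kappa$ to $m_2$ instead, a swap your consistent reading would expose but which is absorbed by relabeling the arbitrary constants $b_1,b_2$.
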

\subsubsection{Case (For asymptotic lines)}
Let $\alpha$ be non-straight line asymptotic line on a timelike surface. Then $\kappa_n= -\kappa\sin\theta = 0$. As $\kappa\neq 0$, we get $\sin\theta = 0$. So it implies that $\kappa_g = \kappa$, $\tau_g=-\tau$. From  (\ref{s3eq4}), we have
following differential equation system
\begin{equation}\label{s3eq22}
  {\left\{ \begin{array}{ccc}
    m'_1 &=& -m_2\kappa-h(s)  \\
    m'_2 &=&-m_1\kappa-m_3\tau \\
    m'_3 &=& m_2\tau.
\end{array}\right.}
\end{equation}
By using (\ref{s3eq22}), we get 
\begin{equation}\label{s3eq23}
\frac{1}{\kappa}\left(\frac{1}{\kappa}(m'_1+h)\right)''+\left[\left(\frac{1}{\kappa}\right)'-\frac{1}{\tau}\left(\frac{\tau}{\kappa}\right)'\right]\left(\frac{1}{\kappa}(m'_1+h)\right)'+\left(\frac{\tau}{\kappa}\right)^2(m'_1+h)+\left(\frac{\tau}{\kappa}\right)'\frac{\kappa}{\tau}m_1-m'_1=0.
\end{equation}

\textbf{Subcase 1:} $m_1\neq 0$ ($h(s)=0)$.\\
If we take as $h(s)=0$ in equation (\ref{s3eq23}), we get following differential equation
\begin{equation}\label{s3eq24}
\frac{1}{\kappa}\left(\frac{1}{\kappa}m'_1\right)''+\left[\left(\frac{1}{\kappa}\right)'-\frac{1}{\tau}\left(\frac{\tau}{\kappa}\right)'\right]\left(\frac{1}{\kappa}m'_1\right)'+\left[\left(\frac{\tau}{\kappa}\right)^2-1\right]m'_1+\left(\frac{\tau}{\kappa}\right)'\frac{\kappa}{\tau}m_1=0.
\end{equation}
\begin{theorem} Let $\alpha$ be a timelike asymptotic line lying a timelike surface in $M$. 
Let $(\alpha, \beta)$ be a  pair of unit speed curves of constant breadth. If $m_1$ is non-zero constant then $\alpha$ is a
general helix in the three dimensional Walker manifold $(M, g_f^\epsilon)$. Also the curve $\beta$ is given as:  
\begin{equation}\label{s3eq25}
  \beta(s^{\star}) = \alpha(s) +m_1T(s) + m_3U(s)
\end{equation}
where $m_3$ is a real constant and $s^*=-s+c$.
\end{theorem}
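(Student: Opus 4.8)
The plan is to follow almost verbatim the template of the geodesic subcase (the theorem yielding \eqref{s3eq11}), since the asymptotic system \eqref{s3eq22} has the same structural shape as the geodesic system, with the roles of the second and third equations interchanged. First I would feed the hypothesis that $m_1$ is a nonzero constant into the third-order differential equation \eqref{s3eq24}. Because $m_1' \equiv 0$, every term of \eqref{s3eq24} that carries a factor of $m_1'$ or one of its derivatives vanishes identically, and the equation collapses to the single relation $\left(\frac{\tau}{\kappa}\right)'\frac{\kappa}{\tau}\,m_1 = 0$. Since $m_1 \neq 0$ and both $\kappa$ and $\tau$ are assumed nonvanishing (the factor $\kappa/\tau$ already presupposes $\tau \neq 0$), this forces $\left(\frac{\tau}{\kappa}\right)' = 0$, i.e. $\tau/\kappa$ is constant, which is exactly the defining property of a general helix. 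That settles the first assertion.

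Next I would reconstruct the remaining coefficients $m_2$ and $m_3$ directly from the system \eqref{s3eq22} specialized to $m_1' = 0$ and $h \equiv 0$. The first equation $m_1' = -m_2\kappa - h$ reduces to $0 = -m_2\kappa$, and since $\kappa \neq 0$ this gives $m_2 = 0$. Substituting $m_2 = 0$ into the third equation $m_3' = m_2\tau$ yields $m_3' = 0$, so $m_3$ is a real constant. Inserting $m_2 = 0$ back into the defining relation \eqref{s3eq1} then produces precisely the claimed expression \eqref{s3eq25}, namely $\beta(s^{\star}) = \alpha(s) + m_1T(s) + m_3U(s)$.

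The step I expect to need the most care is the compatibility check furnished by the hitherto unused second equation of \eqref{s3eq22}. With $m_2 \equiv 0$ one has $m_2' = 0$, so that equation demands $m_1\kappa + m_3\tau = 0$, equivalently $m_1 + m_3(\tau/\kappa) = 0$. This can hold with both $m_1$ and $m_3$ constant only because $\tau/\kappa = c_0$ is constant, which is exactly the general-helix condition extracted in the first step; it moreover pins the constant down as $m_3 = -m_1/c_0$. Thus the helix property is not merely a by-product but the very consistency condition that makes a constant-coefficient solution admissible, and I would make this explicit rather than leave the second equation unaddressed. Finally, the constant-breadth reduction \eqref{s3eq6} already imposes $h \equiv 0$ throughout this subcase; since $h(s) = \frac{ds^{\star}}{ds} + 1$, integrating $\frac{ds^{\star}}{ds} = -1$ gives $s^{\star} = -s + c$ for a constant $c$, which completes the statement.
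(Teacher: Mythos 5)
Your proposal is correct and follows essentially the same route as the paper's proof: the paper likewise reads $\left(\frac{\tau}{\kappa}\right)'=0$ off equation (\ref{s3eq24}) once $m_1'\equiv 0$, and obtains $m_2=0$ and $m_3=\text{constant}$ from the first and third equations of (\ref{s3eq22}). Your additional compatibility check via the second equation of (\ref{s3eq22}), which pins down $m_3=-m_1/c_0$, is a worthwhile detail the paper leaves implicit, but it does not alter the argument.
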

\begin{proof}
If $m_1$ is non zero constant, then from (\ref{s3eq24}) we obtain that $\left(\frac{\tau}{\kappa}\right)'=0$. So $\alpha$ is a general helix. Also from the first and third equation of (\ref{s3eq22}) we get $m_2=0$ and $m_3$ is a real constant, respectively.
\end{proof}
\begin{theorem}
Let $\alpha$ be a timelike asymptotic line lying in a timelike surface in $M$. Let $(\alpha, \beta)$ be a pair of unit speed curves of constant breadth. If $m_1$ is not zero, then the curve $\beta$ can be expressed as one of the following cases:
\begin{eqnarray}\label{s3eq29}
  \beta(s^*)=\alpha(s)+m_1T(s)-\Dot{m}_1Y(s)+\frac{1}{c_0}(\Ddot{m}_1-m_1)U(s),
\end{eqnarray}
where 
\begin{itemize}
    \item[i)]  $m_1=\frac{1}{\sqrt{c_0^2-1}}\left(a_1\sin(\sqrt{c_0^2-1}z)-a_2\cos(\sqrt{c_0^2-1}z)\right)+a_3$,\,\, $c_0^2-1>0$ \\
   \item[ii)] $m_1=\frac{a_1}{2}z^2+a_2z+a_3$,\,\,   $c_0^2-1=0$\\
   \item[iii)] $m_1=\frac{1}{\sqrt{1-c_0^2}}\left(a_1\sinh(\sqrt{1-c_0^2}z)+a_2\cosh(\sqrt{1-c_0^2}z)\right)+a_3$,\,\, $c_0^2-1<0$
\end{itemize}
where $z=\int\kappa ds$ and $a_1, a_2, a_3$ are constants.
\end{theorem}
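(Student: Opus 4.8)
The plan is to run exactly the same argument used for the timelike geodesic general helix (the theorem built on (\ref{s3eq13})--(\ref{s3eq14})), since the governing equation (\ref{s3eq24}) for the asymptotic subcase $h\equiv 0$ is formally identical to (\ref{s3eq10}). As in the geodesic case the final formulas involve the constant $c_0=\tau/\kappa$, so the argument is carried out under the general-helix hypothesis $\tau/\kappa=c_0=\mathrm{const}$, in parallel with the corresponding geodesic statement.

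First I would feed the general-helix condition $\left(\frac{\tau}{\kappa}\right)'=0$ into (\ref{s3eq24}). This annihilates the final term and collapses the bracketed coefficient, reducing (\ref{s3eq24}) to
$$\left(\frac{1}{\kappa}\left(\frac{1}{\kappa}m'_1\right)'\right)'+(c_0^2-1)m'_1=0,$$
which is precisely (\ref{s3eq13}). Next I would change the independent variable to $z=\int\kappa\,ds$, so that $\frac{dz}{ds}=\kappa$ and $m'_1=\Dot{m}_1\kappa$; iterating this substitution turns the equation into the constant-coefficient third-order form
$$\dddot{m}_1+(c_0^2-1)\Dot{m}_1=0,$$
exactly as in (\ref{s3eq14}).

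Then I would integrate this linear ODE through its characteristic polynomial $r^3+(c_0^2-1)r=r\bigl(r^2+(c_0^2-1)\bigr)=0$. The root $r=0$ always supplies the additive constant $a_3$, while $r^2=1-c_0^2$ dictates the three regimes: a conjugate imaginary pair when $c_0^2-1>0$ (the trigonometric solution of case i), a triple zero root when $c_0^2-1=0$ (the quadratic of case ii), and a real pair $\pm\sqrt{1-c_0^2}$ when $c_0^2-1<0$ (the hyperbolic solution of case iii). This reproduces the three expressions listed for $m_1$.

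Finally I would recover $m_2$ and $m_3$ straight from (\ref{s3eq22}) with $h\equiv 0$. The first equation gives $m_2=-m'_1/\kappa=-\Dot{m}_1$, and the second gives $m_3=-(m'_2+m_1\kappa)/\tau$; inserting $m'_2=-\Ddot{m}_1\kappa$ together with $\tau=c_0\kappa$ yields $m_3=\frac{1}{c_0}(\Ddot{m}_1-m_1)$. (As a consistency check, the remaining equation $m'_3=m_2\tau$ is then equivalent to the ODE above, so the system is satisfied.) Substituting $m_1,m_2,m_3$ back into (\ref{s3eq1}) produces (\ref{s3eq29}). The only delicate point — not a deep obstacle, but the place where the computation can slip — is that passing from a geodesic ($\kappa_g=0$) to an asymptotic line ($\kappa_n=0$) interchanges $\kappa_g\leftrightarrow\kappa_n$ and hence reshuffles the system (\ref{s3eq22}) relative to (\ref{s3eq7}); the net effect is that the $Y$- and $U$-components of $\beta$ swap roles, giving $-\Dot{m}_1$ along $Y$ and $\frac{1}{c_0}(\Ddot{m}_1-m_1)$ along $U$ rather than the reverse ordering found in the geodesic theorem.
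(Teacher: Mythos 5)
Your proposal is correct and takes essentially the same approach as the paper: the paper's own proof of this theorem is a one-line deferral to the proof of the geodesic-case theorem, and your argument is exactly that proof transplanted to the asymptotic system (\ref{s3eq22}) --- reduction of (\ref{s3eq24}) to (\ref{s3eq13}) under $\tau/\kappa=c_0$, the substitution $z=\int\kappa\,ds$ yielding $\dddot{m}_1+(c_0^2-1)\Dot{m}_1=0$, and the three regimes of the characteristic roots. You even supply details the paper leaves implicit, namely the tacit general-helix hypothesis and the correct back-substitution $m_2=-\Dot{m}_1$, $m_3=\frac{1}{c_0}(\Ddot{m}_1-m_1)$ with its consistency check against the third equation of (\ref{s3eq22}).
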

\begin{proof}
The proof of Theorem (4.6) is done similarly to the proof of Theorem (4.3)
\end{proof}

\textbf{Subcase 2: $m_1=0$.}\\
If we take as $m_1=0$ in (\ref{s3eq22}) we get following differential equation system
\begin{equation}\label{s3eq30}
  {\left\{ \begin{array}{ccc}
    h(s)&=& -m_2\kappa   \\
    m'_2 &=&-m_3\tau\\
   m'_3&=& m_2\tau.
\end{array}\right.}
\end{equation}
Then we give the following theorem.
\begin{theorem}
Let $\left(\alpha; \beta\right)$ be a curve pair of constant breadth in $(M, g_f)$ where $\alpha$ is
a timelike asymptotic curve lying in a timelike surface in $M$. If $m_1=0$, then the curve $\beta$ is given by
\begin{displaymath}
  \beta(s^*) = \alpha(s) + \left[-b_1\cos\left(\int\tau ds\right)-b_2\sin\left(\int\tau ds\right)\right]Y(s)+\left[-b_1\sin\left(\int\tau ds\right)\\
  +b_2\cos\left(\int\tau ds\right)\right]U(s).
\end{displaymath}
\end{theorem}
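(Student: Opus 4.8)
The plan is to reduce the coupled first-order system coming from the constraint $m_1=0$ to a single solvable linear ODE, exactly as in the geodesic case treated in Theorem 4.4, and then to read off $\beta$ directly from (\ref{s3eq1}). Setting $m_1=0$ in (\ref{s3eq22}) produces the system (\ref{s3eq30}), whose first equation $h(s)=-m_2\kappa$ lets me write $m_2=-\frac{h}{\kappa}$; hence the whole problem is controlled by the single unknown $y:=\frac{h}{\kappa}$, and once $y$ is found the remaining data are determined algebraically.

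First I would use the second equation $m_2'=-m_3\tau$ of (\ref{s3eq30}) to solve for $m_3$, obtaining $m_3=-\frac{m_2'}{\tau}=\frac{1}{\tau}\left(\frac{h}{\kappa}\right)'$. Substituting this into the third equation $m_3'=m_2\tau$ and using $m_2=-\frac{h}{\kappa}$ yields the second-order equation $\left[\frac{1}{\tau}\left(\frac{h}{\kappa}\right)'\right]'+\left(\frac{h}{\kappa}\right)\tau=0$, which in terms of $y$ reads $y''-\frac{\tau'}{\tau}y'+\tau^2 y=0$. This is identical in form to (\ref{s3eq18}), so from this point onward the analysis copies the geodesic case verbatim.

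The one genuinely non-routine step is that this ODE has non-constant coefficients and cannot be integrated directly; the device, as in Theorem 4.4, is to change the independent variable from $s$ to $w$ via $\frac{dw}{ds}=\tau$, i.e. $w=\int\tau\,ds$. Under this substitution the first-order term is eliminated and the equation collapses to the constant-coefficient harmonic oscillator $\ddot{y}+y=0$, whose general solution is $y=b_1\cos w+b_2\sin w$ for real constants $b_1,b_2$. Back-substituting then gives $m_2=-\frac{h}{\kappa}=-y=-b_1\cos\left(\int\tau\,ds\right)-b_2\sin\left(\int\tau\,ds\right)$ and $m_3=\frac{1}{\tau}\left(\frac{h}{\kappa}\right)'=\dot{y}=-b_1\sin\left(\int\tau\,ds\right)+b_2\cos\left(\int\tau\,ds\right)$, while the first equation of (\ref{s3eq30}) is satisfied automatically since $h=-m_2\kappa=\kappa\,y$.

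Finally I would insert $m_1=0$ together with these expressions for $m_2$ and $m_3$ into (\ref{s3eq1}), which immediately produces the claimed formula for $\beta(s^{\star})$. I expect no real obstacle beyond carefully tracking the signs introduced by the relation $m_2=-\frac{h}{\kappa}$ (the origin of the two minus signs in the coefficient of $Y$, in contrast to Theorem 4.4 where $m_2=+\frac{h}{\kappa}$) and confirming that the pair $(m_2,m_3)$ solves the full system (\ref{s3eq30}), which holds by construction.
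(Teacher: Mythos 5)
Your proof is correct and follows essentially the same route as the paper: the paper's own proof is just a pointer to the method of Theorem 4.4, and you carry out exactly that reduction, passing from the system (\ref{s3eq30}) to the single equation $y''-\frac{\tau'}{\tau}y'+\tau^2y=0$ for $y=\frac{h}{\kappa}$ and collapsing it to $\ddot{y}+y=0$ via the substitution $w=\int\tau\,ds$. You also correctly track the sign difference $m_2=-\frac{h}{\kappa}$ (versus $+\frac{h}{\kappa}$ in the geodesic case), which is precisely what produces the minus signs in the $Y$-coefficient of the stated formula for $\beta$.
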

\begin{proof}
The proof of Theorem (4.7) is done similarly to the proof of Theorem (4.4).
\end{proof}
\subsubsection{Case (For Principal line)}
 We suppose that $\alpha$ is a non-planar timelike principal line. Then we have $\tau_g=0$. Then it follows that $\tau=\theta'$. By using (\ref{s3eq4}), we have the following differential equation system
\begin{equation}\label{s3eq35}
  {\left\{ \begin{array}{ccc}
    m'_1&=& m_3\kappa\sin\theta-m_2\kappa\cos\theta-h(s)  \\
    m'_2 &=&-m_1\kappa\cos\theta \\
   m'_3&=& m_1\kappa\sin\theta.
\end{array}\right.}
\end{equation}
By mean of changing of the independant variable $s$ with $\theta=\int\tau ds$, we get
\begin{equation}\label{s3eq37}
  {\left\{ \begin{array}{ccc}
    \Dot{m}_1&=& \phi(m_3\sin\theta-m_2\cos\theta)-g(\theta) \\
    \Dot{m}_2 &=&-m_1\phi\cos\theta \\
   \Dot{m}_3&=& m_1\phi\sin\theta.
\end{array}\right.}
\end{equation}
where $g(\theta)=(-\frac{ds}{d\theta}-\frac{ds^*}{d\theta})$ and $\phi=\frac{\kappa}{\tau}$. In here we denote the derivative with respect to $\theta$ with ".".
From the equations in (\ref{s3eq37}) we have
\begin{eqnarray}\label{s3eq39}
  \dddot{m}_1+\ddot{g}-\frac{d}{d\theta}\left(\frac{\dot{\phi}}{\phi}(\dot{m}_1+g)\right)-\frac{d}{d\theta}(\phi^2m_1)+(\dot{m}_1+g)\nonumber\\-\dot{\phi}\left(-\sin\theta\int m_1\phi\cos\theta d\theta+\cos\theta\int m_1\phi\sin\theta d\theta\right)=0.
\end{eqnarray}

\textbf{Subcase 1:} $m_1\neq 0$ ($h(s)=0)$.\\
In this case, we give the following theorem:
\begin{theorem}
Let $(\alpha, \beta)$ be a pair curves of constant breadth in $(M, g_f\epsilon)$. Let $\alpha$ be a non-planar timelike principal line and a general helix then $\beta$ is given by one of the following cases:
\begin{eqnarray}
  \beta(s^*)=\alpha(s)+m_1T(s)-c\int m_1\cos\theta d\theta Y(s)+c\int m_1\sin\theta d\theta U(s),
\end{eqnarray}
where 
\begin{itemize} 
    \item[i)]  $m_1=\frac{1}{\sqrt{1-c^2}}\left(a_1\sin(\sqrt{1-c^2}\theta)-a_2\cos(\sqrt{1-c^2}\theta)\right)+a_3$, \,\, $1-c^2>0$ \\
    \item[ii)] $m_1=\frac{a_1}{2}\theta^2+a_2\theta+a_3$, \,\,   $c^2-1=0$\\
   \item[iii)] $m_1=\frac{1}{\sqrt{c^2-1}}\left(a_1\sinh(\sqrt{c^2-1}\theta)+a_2\cosh(\sqrt{c^2-1}\theta)\right)+a_3$, \,\, $1-c^2<0$
\end{itemize}
\end{theorem}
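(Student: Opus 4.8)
The plan is to exploit the two standing hypotheses to collapse the master equation (\ref{s3eq39}) into an elementary linear ODE, solve it, and then reconstruct $m_2,m_3$ by quadrature. First I would record the structural simplifications forced by the hypotheses. Since $\alpha$ is a general helix, the ratio $\tau/\kappa$ is constant, hence $\phi=\kappa/\tau=c$ is constant and $\dot\phi=0$. Since $m_1\neq 0$, the relation $m_1 h(s)=0$ from (\ref{s3eq6}) forces $h(s)=0$; tracing this through the definition of $g$ gives $g(\theta)\equiv 0$ as well, because $ds^*/ds=h-1$ yields $\frac{ds^*}{d\theta}=(h-1)\frac{ds}{d\theta}$ and hence $g=-\frac{ds}{d\theta}-\frac{ds^*}{d\theta}=-h\,\frac{ds}{d\theta}$, which vanishes when $h=0$. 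In particular $\dot g=\ddot g=0$.

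Next I would substitute $\dot\phi=0$ and $g\equiv 0$ into (\ref{s3eq39}). The term $\frac{d}{d\theta}\!\left(\frac{\dot\phi}{\phi}(\dot m_1+g)\right)$ and the final bracketed integral term both carry a factor $\dot\phi$ and therefore drop out, while $\frac{d}{d\theta}(\phi^2 m_1)=c^2\dot m_1$. What survives is
\begin{equation}
\dddot{m}_1+(1-c^2)\dot{m}_1=0.
\end{equation}
I would then set $p=\dot{m}_1$, reducing this to the constant-coefficient equation $\ddot{p}+(1-c^2)p=0$, whose solution splits by the sign of $1-c^2$: trigonometric when $1-c^2>0$, affine when $1-c^2=0$, and hyperbolic when $1-c^2<0$. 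A single further integration in $\theta$ produces exactly the three closed forms (i)--(iii) for $m_1$, with $a_1,a_2,a_3$ the integration constants.

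Finally, I would recover the remaining coefficients directly from the last two lines of (\ref{s3eq37}): with $\phi=c$, these read $\dot{m}_2=-c\,m_1\cos\theta$ and $\dot{m}_3=c\,m_1\sin\theta$, so that $m_2=-c\int m_1\cos\theta\,d\theta$ and $m_3=c\int m_1\sin\theta\,d\theta$. Substituting $m_1,m_2,m_3$ into (\ref{s3eq1}) then yields the stated expression for $\beta(s^*)$.

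The main obstacle I anticipate is purely the bookkeeping in the reduction of (\ref{s3eq39}): one must verify that, once $\dot\phi=0$ and $g\equiv 0$, every nonlinear and integral contribution is annihilated and that among the $\phi$-dependent pieces only $\frac{d}{d\theta}(\phi^2m_1)=c^2\dot{m}_1$ remains. A sign slip here would turn $1-c^2$ into $c^2-1$ and scramble the case labels, so I would cross-check the reduced equation against the second and third equations of (\ref{s3eq37}) before solving. The ODE integration and the reassembly of $\beta$ are then routine.
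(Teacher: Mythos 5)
Your proposal is correct and follows essentially the same route as the paper: substitute $g\equiv 0$ (forced by $m_1h=0$ with $m_1\neq 0$) and $\dot\phi=0$ (from the helix hypothesis $\phi=\kappa/\tau=c$) into (\ref{s3eq39}), so that only $\dddot m_1+(1-c^2)\dot m_1=0$ survives, split on the sign of $1-c^2$ to get the three closed forms, and recover $m_2=-c\int m_1\cos\theta\,d\theta$ and $m_3=c\int m_1\sin\theta\,d\theta$ from the last two equations of (\ref{s3eq37}). If anything, you are slightly more complete than the paper's proof, which asserts $h=0\Rightarrow g=0$ without the computation $g=-h\,\frac{ds}{d\theta}$ and leaves the final quadratures for $m_2,m_3$ implicit.
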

\begin{proof}
If $h(s)=0$ then $g(\theta)=0$ and from (\ref{s3eq39}) we have
\begin{eqnarray}\label{s3eq40}
  \dddot{m}_1-\frac{d}{d\theta}\left(\frac{\dot{\phi}}{\phi}\dot{m}_1\right)-\frac{d}{d\theta}(\phi^2m_1)+\dot{m}_1-\dot{\phi}\left(-\sin\theta\int m_1\phi\cos\theta d\theta+\cos\theta\int m_1\phi\sin\theta d\theta\right)=0.
\end{eqnarray}
If $\alpha$ is helix curve then $\phi=\frac{\kappa}{\tau}=c=constant$. From (\ref{s3eq40}) we have
\begin{eqnarray}\label{s3eq41}
  \dddot{m}_1+(1-c^2)\dot{m}_1=0.
\end{eqnarray}
Then the solution is 
\begin{displaymath}
m_1={\left\{ \begin{array}{ccc}
\frac{1}{\sqrt{1-c^2}}\left(a_1\sin(\sqrt{1-c^2}\theta)-a_2\cos(\sqrt{1-c^2}\theta)\right)+a_3, \,\,\text{if} \,\,1-c^2>0 \\
\frac{a_1}{2}\theta^2+a_2\theta+a_3, \,\,\,\,\,\,\,\text{if} \,\,\,\,\,\,\,1-c^2=0\\
\frac{1}{\sqrt{c^2-1}}\left(a_1\sinh(\sqrt{c^2-1}\theta)+a_2\cosh(\sqrt{c^2-1}\theta)\right)+a_3, \,\,\text{if} \,\,1-c^2<0,
\end{array}\right.}
\end{displaymath}
where $\theta=\int\tau d\theta$.
\end{proof}
\textbf{Subcase 2:} $m_1= 0$.\\
The case where $m_1=0$, we have the following the following theorem:
\begin{theorem}
Let $(\alpha, \beta)$ be a pair curves of constant breadth in $(M, g_f\epsilon)$. Let $\alpha$ be a non-planar timelike principal line. If $m_1=0$ then $\alpha$ is general helix. The curve $\beta$ is expressed as
\begin{eqnarray}
  \beta(s^*)=\alpha(s)+c_2Y(s)+c_3U(s),
\end{eqnarray}
where $c_2$ and $c_3$ are constants.
\end{theorem}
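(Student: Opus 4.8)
The plan is to exploit the drastic simplification that the hypothesis $m_1=0$ produces in the transformed system (\ref{s3eq37}).

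First I would substitute $m_1=0$ into the second and third equations of (\ref{s3eq37}). Since each of their right-hand sides carries an explicit factor $m_1$, they reduce to $\dot m_2=0$ and $\dot m_3=0$; hence $m_2=c_2$ and $m_3=c_3$ are constants. Feeding these back into (\ref{s3eq1}) with $m_1=0$ yields at once the asserted expression $\beta(s^*)=\alpha(s)+c_2Y(s)+c_3U(s)$. I would also note that the breadth $-m_1^2+m_2^2+m_3^2=c_2^2+c_3^2$ is then automatically constant, so the constant-breadth requirement is consistent and imposes no further condition at this stage.

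Next, to obtain the helix conclusion, I would return to the first equation of (\ref{s3eq37}). With $m_1=0$ we have $\dot m_1=0$, so that equation becomes $g(\theta)=\phi(\theta)\,(c_3\sin\theta-c_2\cos\theta)$, expressing the reparametrization function $g$ in terms of $\phi=\kappa/\tau$. Writing $P(\theta)=c_3\sin\theta-c_2\cos\theta$, so that $\ddot P=-P$ and $\dot P=c_3\cos\theta+c_2\sin\theta$, the idea is to insert $m_1=0$ into the master equation (\ref{s3eq39}): the terms $\dddot m_1$, $\tfrac{d}{d\theta}(\phi^2m_1)$ and the two integral terms all vanish because their integrands carry the factor $m_1$, leaving a relation involving only $g$ and $\phi$. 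Substituting $g=\phi P$ into this reduced relation, I expect every contribution containing $\phi$, $\dot\phi^2$ or $\ddot\phi$ to cancel, collapsing the whole expression to a single product $\dot\phi\,\dot P=0$. Since $\dot P=c_3\cos\theta+c_2\sin\theta$ is not identically zero in the nondegenerate case $(c_2,c_3)\neq(0,0)$ (i.e. $\beta\neq\alpha$), this forces $\dot\phi=0$; thus $\kappa/\tau$ is constant and $\alpha$ is a general helix.

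The hard part will be the reduction of (\ref{s3eq39}) after setting $m_1=0$: one must treat the $\tfrac{\dot\phi}{\phi}$-term and the constants arising from the integral terms carefully, and then verify that once $g=\phi P$ is inserted every term carrying $\phi$, $\dot\phi^2$ or $\ddot\phi$ really does cancel, so that only the factor $\dot\phi$ survives. Isolating this factor, and confirming that the companion trigonometric factor $\dot P$ cannot vanish identically (which is precisely where the nondegeneracy $(c_2,c_3)\neq(0,0)$ enters), is the delicate step on which the helix conclusion rests.
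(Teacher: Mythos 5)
Your proposal is correct and follows essentially the same route as the paper: with $m_1=0$ you get $m_2=c_2$, $m_3=c_3$ constant, reduce (\ref{s3eq39}) to $\ddot g-\frac{d}{d\theta}\bigl(\frac{\dot\phi}{\phi}g\bigr)+g=0$, substitute $g=\phi(c_3\sin\theta-c_2\cos\theta)$, and collapse everything to $\dot\phi\,(c_2\sin\theta+c_3\cos\theta)=0$, which is exactly the paper's chain (\ref{s3eq41})--(\ref{s3eq43}), including the same treatment of the integral terms. The only difference is in the closing step: the paper rules out $c_2\sin\theta+c_3\cos\theta\equiv 0$ by noting it would force $\theta$ to be constant, hence $\tau=\theta'=0$ and $\alpha$ planar, contradicting the non-planarity hypothesis, whereas you appeal to $(c_2,c_3)\neq(0,0)$ --- which suffices only because non-planarity already makes $\theta=\int\tau\,ds$ a genuinely varying parameter, so you should cite that hypothesis explicitly at this point.
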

\begin{proof}
From (\ref{s3eq39}) we have
\begin{eqnarray}\label{s3eq41}
 \ddot{g}-\frac{d}{d\theta}\left(\frac{\dot{\phi}}{\phi}g\right)+g=0.
\end{eqnarray}
On the other hand, from (\ref{s3eq35}) we have $m_2=c_2=constant\neq 0$, $m_3=c_3=constant\neq 0$ and from (\ref{s3eq37})
\begin{eqnarray}\label{s3eq42}
  g=\phi(-c_2\cos\theta+c_3\sin\theta).
\end{eqnarray}
By considering (\ref{s3eq41}) and (\ref{s3eq42}) with together, we get
\begin{eqnarray}\label{s3eq43}
  \dot{\phi}(c_2\sin\theta+c_3\cos\theta)=0.
\end{eqnarray}
Then we have $\dot{\phi}=0$ or $c_2\sin\theta+c_3\cos\theta=0$. If $c_2\sin\theta+c_3\cos\theta=0$ then we have that $\theta$ is a constant. So $\alpha$ becomes a planar curve. It is a contridiction. So $\dot{\phi}=0$. Then we obtain that ${\phi=\frac{\kappa}{\tau}}$ is a constant. Thus $\alpha$ is a general helix.
\end{proof}

\subsection{ Case where $\alpha$ is spacelike and $\epsilon_2=1$ and $\epsilon_3=-1$.}
Here we suppose that the curve $\alpha$ is spacelike and lying on a timelike surface in $M_f$.\\
Differentiating (\ref{s3eq1}) with respect to $s$ and using (\ref{DF2}) we obtain
\begin{eqnarray}\label{Case2.1}
  \frac{d\beta}{ds}&=&\frac{d\beta}{ds^{\star}}\frac{ds^{\star}}{ds}\nonumber\\
  &=&T^{\star}\frac{ds^{\star}}{ds}=(1+m'_1-m_2 \kappa_g-m_3\kappa_n)T\nonumber\\
  && +(m'_2 -m_1\kappa_g +m_3\tau_g)Y\nonumber\\
  &&+(m'_3+m_2\tau_g+m_1\kappa_n)U,
\end{eqnarray}
where $T^{\star}$ denotes the tangent vector of $\beta$.\\
Since $T=-T^*$, from the equation in (\ref{s3eq2}) we have
\begin{equation}\label{case2.2}
    {\left\{ \begin{array}{ccc}
m'_1&=&m_2 \kappa_g+m_3\kappa_n -h(s) \\
  m'_2 &=& m_1\kappa_g - m_3\tau_g  \\
 m'_3&=&-m_2\tau_g-m_1\kappa_n,
\end{array}\right.}
\end{equation}
where $h(s)=\frac{ds^*}{ds}+1$.\\
Since $\alpha$ is spacelike and $\epsilon_2=1$ and$\epsilon_3=-1$, then, if we assume that $(\alpha, \beta)$ is a curve pair of constant breadth, we have
\begin{eqnarray}
  \Vert\beta-\alpha\Vert=m_1^2+m_2^2-m_3^2=constant,
\end{eqnarray}
which imlplies that
\begin{equation}\label{case2.3}
  m_1\frac{dm_1}{ds}+ m_2\frac{dm_2}{ds}- m_3\frac{dm_3}{ds}=0.
\end{equation}
If we combine (\ref{case2.2}) and (\ref{case2.3}) we get
\begin{eqnarray}\label{case2.4}
  m_1(2m'_1+h(s))=0.
\end{eqnarray}
If $\alpha$ and $\beta$ are curves of constant breadth then $m_1=0$ or $2m'_1-h(s)=0$.\\
Now we investigate the case where $\alpha$ is geodesic curve or principal line curve because $\kappa_n\neq 0$.
\subsubsection{ Case (For geodesic curves)}
Let $\alpha$ be non-straight line geodesic curve on a timelike surface. Then $\kappa_g = \kappa\sinh\theta = 0$. As $\kappa\neq 0$, we get $\sinh\theta = 0$. So it implies that $\kappa_n = \kappa$, $\tau_g=\tau$. From (\ref{case2.2}), we have the
following differential equation system
\begin{equation}\label{G1}
  {\left\{ \begin{array}{ccc}
    m'_1&=& m_3\kappa-h(s)  \\
    m'_2 &=&-m_3\tau \\
   m'_3&=& -m_1\kappa-m_2\tau.
\end{array}\right.}
\end{equation}
From (\ref{G1}) we have 
\begin{equation}\label{G2}
  {\left\{ \begin{array}{ccc}
    m_3&=& \frac{1}{\kappa}(m'_1+h)  \\
    m'_2 &=&-\frac{\tau}{\kappa}(m'_1+h)\\
   m_2&=& -\frac{1}{\tau}\left((\frac{1}{\kappa}(m'_1+h))'+m_1\kappa\right).
\end{array}\right.}
\end{equation}
Differentiating the third equation of (\ref{G1}) with respect to $s$ and using the first, the second and the third equations of (\ref{G2}), we obtain the following equation:
\begin{equation}\label{G3}
\frac{1}{\kappa}\left(\frac{1}{\kappa}(m'_1+h)\right)''+\left[\left(\frac{1}{\kappa}\right)'-\frac{1}{\tau}\left(\frac{\tau}{\kappa}\right)'\right]\left(\frac{1}{\kappa}(m'_1+h)\right)'-\left(\frac{\tau}{\kappa}\right)^2(m'_1+h)-\left(\frac{\tau}{\kappa}\right)'\frac{\kappa}{\tau}m_1+m'_1=0.
\end{equation}

\textbf{Subcase 1:} $m_1\neq 0$  ($h(s)=-2m'_1)$.\\
The equation (\ref{G3}) becomes
\begin{equation}\label{G4}
\frac{1}{\kappa}\left(\frac{1}{\kappa}m'_1\right)''+\left[\left(\frac{1}{\kappa}\right)'-\frac{1}{\tau}\left(\frac{\tau}{\kappa}\right)'\right]\left(\frac{1}{\kappa}m'_1\right)'-\left[\left(\frac{\tau}{\kappa}\right)^2+1\right]m'_1+\left(\frac{\tau}{\kappa}\right)'\frac{\kappa}{\tau}m_1=0.
\end{equation}

\begin{theorem} Let $\alpha$ be a geodesic curve. 
Let $(\alpha; \beta)$ be a  pair of unit speed curves of constant breadth where $\alpha$ is spacelike ($\epsilon_2=1$, $\epsilon_3=-1$) and lying in a timelike surface in $M_f$. If $m_1$ is non-zero constant then $m_3=0$ and $\alpha$ is a
general helix in the three dimensional Walker manifold $(M, g_f^\epsilon)$. Also the curve $\beta$ is given as:  
\begin{equation}\label{G5}
  \beta(s^{\star}) = \alpha(s) +m_1T + cY 
\end{equation}
where $c$ is a real constant and $s^*=-s+c$.
\end{theorem}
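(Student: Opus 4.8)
The plan is to use the hypothesis that $m_1$ is a nonzero constant to annihilate almost every term in the governing equation (\ref{G4}), and then to read off $m_2$ and $m_3$ directly from the first-order system (\ref{G1}).

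First I would note that, being in Subcase 1, the constant-breadth relation (\ref{case2.4}) forces $h(s)=-2m_1'$; since $m_1$ is constant we have $m_1'=0$ and hence $h\equiv 0$. Substituting $m_1'=0$ into (\ref{G4}) kills every term except the last, leaving $\left(\frac{\tau}{\kappa}\right)'\frac{\kappa}{\tau}m_1=0$. Because $m_1\neq 0$ and neither $\kappa$ nor $\tau$ vanishes (the curve is non-straight, so $\kappa\neq 0$, and it is understood to be non-planar, so $\tau\neq 0$), this yields $\left(\frac{\tau}{\kappa}\right)'=0$; thus $\tau/\kappa$ is constant and $\alpha$ is a general helix.

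Next I would recover the remaining coefficients. With $h=0$ the first equation of (\ref{G1}) reads $0=m_1'=m_3\kappa$, so $m_3=0$ because $\kappa\neq 0$; the second equation then gives $m_2'=-m_3\tau=0$, so $m_2=c$ is a real constant. Feeding $m_3=0$ and $m_2=c$ into the representation (\ref{s3eq1}) produces $\beta(s^{\star})=\alpha(s)+m_1T+cY$, which is exactly (\ref{G5}). Finally, from $h\equiv 0$ and the definition $h(s)=\frac{ds^{\star}}{ds}+1$ one obtains $\frac{ds^{\star}}{ds}=-1$, and integrating gives $s^{\star}=-s+c$.

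I do not expect any genuine analytic obstacle: once $m_1'=0$ is inserted the computation is purely algebraic. The only point needing care is the collapse of (\ref{G4}) to the single term $\left(\frac{\tau}{\kappa}\right)'\frac{\kappa}{\tau}m_1$, which relies on $\kappa,\tau\neq 0$; and it is worth verifying consistency with the third equation of (\ref{G1}), namely $m_3'=-m_1\kappa-m_2\tau$, which with $m_3\equiv 0$ reduces to $0=-m_1\kappa-c\tau$, i.e. $\tau/\kappa=-m_1/c$, in agreement with the general-helix conclusion already obtained.
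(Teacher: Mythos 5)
Your proof is correct and follows essentially the same route as the paper: with $m_1$ constant and $h=-2m_1'=0$, equation (\ref{G4}) collapses to $\left(\frac{\tau}{\kappa}\right)'\frac{\kappa}{\tau}m_1=0$, forcing $\left(\frac{\tau}{\kappa}\right)'=0$, after which the first and second equations of (\ref{G1}) give $m_3=0$ and $m_2=c$ (the paper's proof cites the ``second and third'' equations of (\ref{G1}), but the content is the same and your attribution is the accurate one). Your closing consistency check with the third equation of (\ref{G1}), namely $0=-m_1\kappa-c\tau$, is a worthwhile addition: it gives $\tau/\kappa=-m_1/c$ constant directly (and shows $c\neq 0$), so the helix conclusion could even be reached without invoking (\ref{G4}) at all.
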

\begin{proof}
If $m_1$ is non zero constant, then from (\ref{G4}) we obtain that $\left(\frac{\tau}{\kappa}\right)'=0$. So $\alpha$ is a general helix. Also from the second and third equation of (\ref{G1}) we get $m_3=0$ because $h=0$ and $m_2$ is a real constant.
\end{proof}

\begin{theorem}
Let $\alpha$ be a geodesic curve. Let $(\alpha, \beta)$ be a pair of unit speed curves of constant breadth where $\alpha$ is spacelike curve ($\epsilon_2=1$, $\epsilon_3=-1$) and lying in a timelike surface $M_f$. If $m_1$ is not zero, then the curve $\beta$ can be expressed as one of the following cases:
\begin{eqnarray}\label{s3eq15}
  \beta(s^*)=\alpha(s)+m_1T+\frac{1}{c_0}(\Ddot{m}_1-m_1)Y+\Dot{m}_1U,
\end{eqnarray}
where 
  $m_1=\frac{1}{\sqrt{1+c_0^2}}\left(a_1e^{\sqrt{1+c_0^2}\theta}-a_2e^{-\sqrt{1+c_0^2}\theta}\right)$,
$m_3=-\dot{m}_1$ and $m_2=\frac{1}{c_0}(\ddot{m}_1-m_1)$.
\end{theorem}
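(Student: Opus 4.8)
The plan is to follow the route of the proof of Theorem 4.3, adapted to the spacelike geodesic equation (\ref{G4}) and tracking the one sign that distinguishes the two settings. First I would invoke the hypothesis that $\alpha$ is a general helix to set $\frac{\tau}{\kappa}=c_0=\text{constant}$, so that every term of (\ref{G4}) carrying the factor $\left(\frac{\tau}{\kappa}\right)'$ vanishes. What survives is
\begin{equation*}
\frac{1}{\kappa}\left(\frac{1}{\kappa}m'_1\right)''+\left(\frac{1}{\kappa}\right)'\left(\frac{1}{\kappa}m'_1\right)'-\left(1+c_0^2\right)m'_1=0,
\end{equation*}
and the first two terms are exactly $\left(\frac{1}{\kappa}\left(\frac{1}{\kappa}m'_1\right)'\right)'$, so the whole equation becomes the single identity $\left(\frac{1}{\kappa}\left(\frac{1}{\kappa}m'_1\right)'\right)'-\left(1+c_0^2\right)m'_1=0$.

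Next I would change the independent variable to $z=\int\kappa\,ds$, as in Theorem 4.3, so that $\frac{d}{ds}=\kappa\frac{d}{dz}$ and hence $m'_1=\kappa\Dot{m}_1$. A short computation shows that each nested operator $\frac{1}{\kappa}(\cdot)'$ collapses to a plain $z$-derivative, and the displayed equation turns into the constant-coefficient ODE
\begin{equation*}
\dddot{m}_1-\left(1+c_0^2\right)\Dot{m}_1=0.
\end{equation*}
The decisive difference from the timelike case is that the coefficient here is $1+c_0^2$ rather than $c_0^2-1$. Because $1+c_0^2>0$ for every $c_0$, the characteristic roots $0,\pm\sqrt{1+c_0^2}$ are always real and distinct, so there is a single exponential branch and no trichotomy; this is precisely why the statement lists one formula for $m_1$ instead of three.

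Finally, writing $u=\Dot{m}_1$ and solving $\ddot{u}-(1+c_0^2)u=0$, then integrating once, gives $m_1=\frac{1}{\sqrt{1+c_0^2}}\left(a_1e^{\sqrt{1+c_0^2}z}-a_2e^{-\sqrt{1+c_0^2}z}\right)$. To recover the remaining coefficients I would feed $h=-2m'_1$, so that $m'_1+h=-m'_1$ and $\frac{1}{\kappa}m'_1=\Dot{m}_1$, into (\ref{G2}): the first line yields $m_3=-\Dot{m}_1$, and the third line, after differentiating and passing again to the $z$-variable, yields $m_2=\frac{\kappa}{\tau}(\Ddot{m}_1-m_1)=\frac{1}{c_0}(\Ddot{m}_1-m_1)$. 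Substituting these into (\ref{s3eq1}) produces the asserted expression for $\beta$. I expect the only genuine obstacle to be the bookkeeping in the change of variable, namely verifying that the nested $\frac{1}{\kappa}$-operators telescope correctly into $z$-derivatives, since everything else is routine linear ODE theory; the one conceptual point is the positive-definiteness of $1+c_0^2$, which is what eliminates the case analysis present in the timelike results.
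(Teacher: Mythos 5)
Your proposal is correct and follows essentially the same route as the paper's own proof: impose the general helix condition $\frac{\tau}{\kappa}=c_0$ so that (\ref{G4}) telescopes to $\left(\frac{1}{\kappa}\left(\frac{1}{\kappa}m'_1\right)'\right)'-\left(1+c_0^2\right)m'_1=0$, change variable to $z=\int\kappa\,ds$ to obtain $\dddot{m}_1-(1+c_0^2)\dot{m}_1=0$, solve, and recover $m_3=-\dot{m}_1$ and $m_2=\frac{1}{c_0}(\ddot{m}_1-m_1)$ from (\ref{G2}), exactly as the paper does. The only blemishes are ones you share with (indeed inherit from) the paper: the general helix hypothesis is used though it is absent from the theorem's statement, the additive constant coming from the characteristic root $0$ is dropped from $m_1$, and the derived value $m_3=-\dot{m}_1$ actually contradicts the $+\dot{m}_1U$ term displayed in the statement's formula for $\beta$ --- an inconsistency in the paper's statement rather than in your argument.
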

\begin{proof}
Let us consider that $\alpha$ is a general helix in Wlaker 3-manifold. Then we have $\frac{\tau}{\kappa}=c_0=constant$. From (\ref{G4}), we have 

\begin{equation}\label{G6}
\left(\frac{1}{\kappa}\left(\frac{1}{\kappa}m'_1\right)'\right)'-\left(c_0^2+1\right)m'_1=0.
\end{equation}
By means of changing of the independant variable $s$ with $z=\int\kappa ds$, we obtain
$$m'_1=\frac{dm_1}{ds}=\frac{dm_1}{dz}\frac{dz}{ds}=\Dot{m}_1\kappa.$$
From (\ref{G6}), we get 
\begin{eqnarray}\label{s3eq14}
  \dddot{m}_1-(c_0^2+1)\Dot{m}_1=0.
\end{eqnarray}
If we solve this equation we get
\begin{eqnarray}\label{G7}
  m_1=\frac{1}{\sqrt{1+c_0^2}}\left(a_1e^{\sqrt{1+c_0^2}\theta}-a_2e^{-\sqrt{1+c_0^2}\theta}\right).
\end{eqnarray}
From (\ref{G2}) we have $m_3=-\dot{m}_1$ and $m_2=\frac{1}{c_0}(\ddot{m}_1-m_1)$.
\end{proof}
\textbf{Subcase 2:} $m_1= 0$.\\
\begin{theorem}
Let $(\alpha, \beta)$ be a pair curves of constant breadth in $(M, g_f\epsilon)$. Let $\alpha$ be a geodesic spacelike curve ($\epsilon_2=1$, $\epsilon_3=-1$) and lying in a timelike surface on $M_f$. If $m_1=0$ then the curve $\beta$ is expressed as
\begin{eqnarray}
  \beta(s^*)=\alpha(s)+cY,
\end{eqnarray}
where $c$ is a constant real.
\end{theorem}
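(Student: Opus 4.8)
The plan is to specialise the geodesic system \eqref{G1} to the value $m_1=0$ and then collapse the resulting first order system into a single linear ordinary differential equation that can be integrated in closed form, following the same device used for the timelike geodesic in the passage \eqref{s3eq18}--\eqref{s3eq20}.

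First I would put $m_1=0$ in \eqref{G1}. The first line then reads $0=m_3\kappa-h(s)$, i.e. $h(s)=m_3\kappa$, while the last two lines reduce to the coupled pair
\begin{eqnarray*}
m'_2=-m_3\tau,\qquad m'_3=-m_2\tau .
\end{eqnarray*}
I would first check that the constant breadth relation \eqref{case2.3} imposes nothing new: differentiating $m_2^2-m_3^2$ and substituting the two equations above gives $m_2m'_2-m_3m'_3=-m_2m_3\tau+m_3m_2\tau=0$, so $m_2^2-m_3^2$ is automatically constant. Next I would decouple the pair. Assuming the non-planar case $\tau\neq0$, I solve the first equation for $m_3=-m'_2/\tau$ and insert it into the second; this produces the second order equation
\begin{eqnarray*}
m''_2-\frac{\tau'}{\tau}\,m'_2-\tau^2 m_2=0 ,
\end{eqnarray*}
the hyperbolic counterpart of \eqref{s3eq18}. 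Introducing $w$ with $\tfrac{dw}{ds}=\tau$ and using $m'_2=\dot m_2\,\tau$ and $m''_2=\ddot m_2\,\tau^2+\dot m_2\,\tau'$ exactly as in \eqref{s3eq19} turns it into the constant coefficient equation $\ddot m_2-m_2=0$. Hence $m_2$ and $m_3$ are spanned by $\cosh(\int\tau\,ds)$ and $\sinh(\int\tau\,ds)$.

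The step I expect to be the main obstacle is pinning the solution down to the single form $\beta=\alpha+cY$, i.e. to $m_3\equiv0$ with $m_2$ constant, rather than to the generic two component hyperbolic family. From $m'_2=-m_3\tau$ and $m'_3=-m_2\tau$ one sees that $m_3\equiv0$ forces $m'_2\equiv0$ (so $m_2=c$ is constant) and also $m_2\tau\equiv0$; since a nondegenerate breadth excludes $m_2\equiv0$, this can hold only when $\tau\equiv0$, that is, when $\alpha$ is planar. I would therefore argue that the geodesic constant breadth configuration with $m_1=0$ is realised precisely in this planar branch, for which $m_2=c$ and $m_3=0$, yielding $\beta(s^{\star})=\alpha(s)+cY(s)$ with $c\in\mathbb{R}$; the remaining non-planar solutions reproduce the two term hyperbolic expression completely parallel to Theorem 4.4, and would be recorded as the generic alternative.
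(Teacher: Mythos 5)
Your computations up to the general solution are all correct: with $m_1=0$ the system \eqref{G1} gives $h=m_3\kappa$, $m'_2=-m_3\tau$, $m'_3=-m_2\tau$; the breadth condition \eqref{case2.4} is indeed vacuous when $m_1\equiv 0$; and for $\tau\neq 0$ your elimination correctly yields $m''_2-\frac{\tau'}{\tau}m'_2-\tau^2m_2=0$, hence the hyperbolic family in $w=\int\tau\, ds$. The gap is in your final paragraph. Nothing you have established excludes the non-planar hyperbolic branch: those solutions, with $h=m_3\kappa\not\equiv 0$, satisfy every hypothesis you actually used, so by your own analysis they are admissible, and the asserted conclusion $\beta=\alpha+cY$ simply does not follow; you cannot both exhibit that family as consistent and then declare that the configuration ``is realised precisely in the planar branch.'' Moreover, the implication you prove points the wrong way: $m_3\equiv 0$ forces $\tau\equiv 0$ (or $m_2\equiv 0$), but planarity does not force $m_3\equiv 0$ --- in the planar branch $m_2$ and $m_3$ are merely constants with $h=m_3\kappa$, so even there $m_3=0$ requires the extra input $h\equiv 0$.

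That extra input is exactly what the paper uses, and its proof is a one-liner taking a different route from yours: it asserts that $m_1=0$ gives $m'_1=0$ and hence $h=0$ --- in effect carrying the Subcase 1 relation $h=-2m'_1$ from \eqref{case2.4} into this subcase --- and then reads off $m_3=h/\kappa=0$ directly from the first equation of \eqref{G1} (since $\kappa\neq 0$ for a non-straight geodesic) and $m_2=\mathrm{constant}$ from the second, with no ODE analysis at all. To close your argument you would need to add the normalisation $h\equiv 0$ (equivalently $s^*=-s+c$) as a hypothesis. Note that this normalisation is \emph{not} forced by \eqref{case2.4} once $m_1\equiv 0$, so your general solution in fact exposes the looseness of the paper's step ``$m'_1=0$ then $h=0$'': without it the correct conclusion is the two-parameter hyperbolic family (the analogue of Theorem 4.4), and even with $h=0$ the consistency condition $m'_3=-m_2\tau=0$ forces $\tau=0$ or $m_2=0$ --- the point your planarity remark correctly identifies and the paper passes over in silence.
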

\begin{proof}
If $m'_1=0$ then $h=0$ and from (\ref{G1}) we have $m_3=0$ and $m_2=constant$.
\end{proof}

\subsubsection{ Case (For Principal line)}

If $\alpha$ is principal line, then $\tau_g=0$ and $\tau=-\theta'$. From (\ref{case2.2})
\begin{equation}\label{case2.5}
    {\left\{ \begin{array}{ccc}
m'_1&=&m_2 \kappa\sinh\theta+m_3\kappa\cosh\theta -h(s) \\
  m'_2 &=& m_1\kappa\sinh\theta \\
 m'_3&=&-m_1\kappa\cosh\theta,
\end{array}\right.}
\end{equation}
By mean of changing of the independant variable $s$ with $\theta=\int\tau ds$, we get
\begin{equation}\label{case2.6}
  {\left\{ \begin{array}{ccc}
    \Dot{m}_1&=& m_3\frac{\kappa}{\tau}\cosh\theta+m_2\frac{\kappa}{\tau}\sinh\theta-\frac{h(s)}{\tau(s)}  \\
    \Dot{m}_2 &=&m_1\frac{\kappa}{\tau}\sinh\theta \\
   \Dot{m}_3&=&- m_1\frac{\kappa}{\tau}\cosh\theta.
\end{array}\right.}
\end{equation}
Denoted by $\frac{h(s)}{\tau(s)}=g(\theta)$ and $\frac{\kappa}{\tau}=\phi$, we have 
\begin{equation}\label{case2.7}
  {\left\{ \begin{array}{ccc}
    \Dot{m}_1&=& \phi(m_3\cosh\theta+m_2\sinh\theta)-g(\theta) \\
    \Dot{m}_2 &=&m_1\phi\sinh\theta \\
   \Dot{m}_3&=& -m_1\phi\cosh\theta.
\end{array}\right.}
\end{equation}
From the equations in (\ref{case2.7}) we have
\begin{equation}\label{case2.8}
  {\left\{ \begin{array}{ccc}
    \frac{1}{\phi}(\Dot{m}_1+g)&=& m_3\cosh\theta+m_2\sinh\theta \\
    \Dot{m}_2\sinh\theta+\Dot{m}_3\cosh\theta&=& -m_1\phi \\
   \Dot{m}_2\cosh\theta&=& -m_3\sinh\theta.
\end{array}\right.}
\end{equation}
Differentiating the first equation in (\ref{case2.7}), we get
\begin{eqnarray}\label{case2.9}
  \dddot{m}_1+\ddot{g}-\frac{d}{d\theta}\left(\frac{\dot{\phi}}{\phi}(\dot{m}_1+g)\right)+\frac{d}{d\theta}(\phi^2m_1)-(\dot{m}_1+g)\nonumber\\-\dot{\phi}\left(\cosh\theta\int m_1\phi\sinh\theta d\theta-\sinh\theta\int m_1\phi\cosh\theta d\theta\right)=0.
\end{eqnarray}

\textbf{Subcase 1:} $m_1\neq 0$ ($m'_1=-\frac{h}{2})$.\\
If $m'_1=-\frac{h}{2}$ then $\dot{m}_1=-\frac{g}{2}$. From (\ref{case2.9}) we obtain 
\begin{eqnarray}\label{case2.10}
  -\dddot{m}_1+\frac{d}{d\theta}\left(\frac{\dot{\phi}}{\phi}\dot{m}_1\right)+\frac{d}{d\theta}(\phi^2m_1)+\dot{m}_1-\dot{\phi}\left(\cosh\theta\int m_1\phi\sinh\theta d\theta-\sinh\theta\int m_1\phi\cosh\theta d\theta\right)=0.
\end{eqnarray}

\begin{theorem}
Let $(\alpha, \beta)$ be a pair curves of constant breadth in $(M, g_f\epsilon)$. Let $\alpha$ be principal line and a general helix then $\beta$ is given by 
\begin{eqnarray}
  \beta(s^*)=\alpha(s)+m_1T+m_2Y+m_3U,
\end{eqnarray}
where 
\begin{eqnarray*}
  m_1=\frac{1}{\sqrt{1+c^2}}\left(a_1e^{\sqrt{1+c^2}\theta}-a_2e^{-\sqrt{1+c^2}\theta}\right),
\end{eqnarray*}
$m_2=c\int m_1\sinh\theta d\theta$ and $m_3=-c\int m_1\cosh\theta d\theta$.
\end{theorem}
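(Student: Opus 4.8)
The plan is to use the general-helix hypothesis to collapse the integro-differential equation (\ref{case2.10}) into an elementary linear ODE with constant coefficients, solve it for $m_1$, and then recover $m_2$ and $m_3$ by integrating the remaining two equations of the system (\ref{case2.7}) directly.

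First I would record that, since $\alpha$ is a general helix, the ratio $\phi=\frac{\kappa}{\tau}=c$ is constant along $\alpha$, so that $\dot{\phi}\equiv 0$. Substituting $\dot{\phi}=0$ into (\ref{case2.10}) annihilates the term $\frac{d}{d\theta}\!\left(\frac{\dot{\phi}}{\phi}\dot{m}_1\right)$ as well as the entire bracketed integral term, which carries $\dot{\phi}$ as a prefactor; meanwhile $\frac{d}{d\theta}(\phi^2 m_1)=c^2\dot{m}_1$. What survives is the constant-coefficient equation
\begin{equation*}
\dddot{m}_1-(1+c^2)\dot{m}_1=0.
\end{equation*}

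Next I would solve this equation. Writing $u=\dot{m}_1$ reduces it to $\ddot{u}-(1+c^2)u=0$, whose characteristic roots $\pm\sqrt{1+c^2}$ are real and distinct because $1+c^2>0$ holds for every $c$ in this spacelike setting; consequently no case trichotomy of the kind met in the earlier timelike principal-line theorem occurs, and the solution is purely exponential. Integrating $u=a_1e^{\sqrt{1+c^2}\theta}-a_2e^{-\sqrt{1+c^2}\theta}$ once in $\theta$ and absorbing the constant of integration yields
\begin{equation*}
m_1=\frac{1}{\sqrt{1+c^2}}\left(a_1e^{\sqrt{1+c^2}\theta}-a_2e^{-\sqrt{1+c^2}\theta}\right),
\end{equation*}
as claimed.

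Finally, with $m_1$ known and $\phi=c$ constant, the second and third equations of (\ref{case2.7}) read $\dot{m}_2=cm_1\sinh\theta$ and $\dot{m}_3=-cm_1\cosh\theta$, which integrate at once to $m_2=c\int m_1\sinh\theta\,d\theta$ and $m_3=-c\int m_1\cosh\theta\,d\theta$. Inserting these three coefficients into the defining expansion (\ref{s3eq1}) produces the asserted expression for $\beta(s^{\star})$. I expect the only point requiring care to be the bookkeeping in the reduction of (\ref{case2.10}): one must verify that every term proportional to $\dot{\phi}$ genuinely drops out and that $\frac{d}{d\theta}(\phi^2 m_1)$ contributes exactly $c^2\dot{m}_1$. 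Once that simplification is confirmed, the remaining steps are routine integrations.
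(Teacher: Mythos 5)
Your proposal is correct and takes essentially the same route as the paper: both substitute $\phi=\frac{\kappa}{\tau}=c$ into (\ref{case2.10}) so that all $\dot{\phi}$-terms (including the integral term) vanish and $\frac{d}{d\theta}(\phi^2 m_1)=c^2\dot{m}_1$, reducing it to $\dddot{m}_1-(1+c^2)\dot{m}_1=0$, which is solved by the stated exponentials; your final step of integrating $\dot{m}_2=cm_1\sinh\theta$ and $\dot{m}_3=-cm_1\cosh\theta$ from (\ref{case2.7}) just makes explicit what the paper leaves implicit. One small shared blemish: like the paper, you drop the additive constant of integration in $m_1$ (your phrase ``absorbing'' it is not quite accurate, since a constant cannot be absorbed into the two exponential terms), so strictly the general solution has an extra constant $a_3$, as in the analogous timelike theorems.
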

\begin{proof}
If $\alpha$ is helix curve then $\phi=\frac{\kappa}{\tau}=c=constant$. From (\ref{case2.10}) we have
\begin{eqnarray}\label{case2.11}
  \dddot{m}_1-(1+c^2)\dot{m}_1=0.
\end{eqnarray}
\begin{eqnarray}\label{case2.12}
  m_1=\frac{1}{\sqrt{1+c^2}}\left(a_1e^{\sqrt{1+c^2}\theta}-a_2e^{-\sqrt{1+c^2}\theta}\right).
\end{eqnarray}
\end{proof}
\textbf{Subcase 2:} $m_1= 0$.\\
From the equations in (\ref{case2.2}) we have 
$m_2=c_2=constant\neq 0$, $m_3=c_3=constant\neq 0$. The first equation in (\ref{case2.2}) gives
\begin{eqnarray}
  \tanh\theta=-\frac{c_2}{c_3}.
\end{eqnarray}
Then $\theta$ is a constant and we have $\tau=0$.
\begin{theorem}
Let $(\alpha, \beta)$ be a pair curves of constant breadth in $(M, g_f\epsilon)$. Let $\alpha$ be principal line. If $m_1=0$ then $\alpha$ is planar curve. The curve $\beta$ is expressed as
\begin{eqnarray}
  \beta(s^*)=\alpha(s)+c_2Y+c_3U,
\end{eqnarray}
where $c_2$ and $c_3$ are constants.
\end{theorem}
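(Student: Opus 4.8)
The plan is to specialize the principal-line system to the branch $m_1\equiv 0$ and read the geometry straight off the reduced equations, exactly as prepared in the lines preceding the statement. First I would set $m_1=0$ (hence $m_1'=0$) in the system (\ref{case2.5}). Because $\alpha$ is a principal line we have $\tau_g=0$, so the second and third equations collapse to $m_2'=m_1\kappa\sinh\theta=0$ and $m_3'=-m_1\kappa\cosh\theta=0$; therefore $m_2\equiv c_2$ and $m_3\equiv c_3$ are constants, and in the non-degenerate configuration $c_2,c_3\neq 0$. Substituting $m_1=0$ into (\ref{s3eq1}) then produces the asserted expression $\beta(s^{*})=\alpha(s)+c_2Y(s)+c_3U(s)$ at once, so the real content of the theorem is the claim that $\alpha$ is planar.

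Next I would feed these constants back into the first equation of (\ref{case2.5}). With $m_1'=0$ it reads $c_2\kappa\sinh\theta+c_3\kappa\cosh\theta=h(s)$. The crux is to show that the right-hand side vanishes; granting that, I would divide by $\kappa\cosh\theta$ (legitimate since $\kappa\neq 0$) to obtain $\tanh\theta=-c_3/c_2$, a constant. Since $\tanh$ is injective this forces $\theta$ to be constant along $\alpha$, so $\theta'=0$.

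Finally I would invoke the principal-line relation $\tau=-\theta'$ recorded at the start of this subsection: $\theta'=0$ yields $\tau\equiv 0$, and a space curve of vanishing torsion is planar. Combined with the first paragraph this is precisely the statement of the theorem, including the displayed form of $\beta$.

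The main obstacle is exactly the vanishing of $h$ used in the second step. Once $m_1\equiv 0$, the combined constant-breadth identity (\ref{case2.4}) is satisfied automatically and imposes no further restriction, so the relation $h=c_2\kappa\sinh\theta+c_3\kappa\cosh\theta$ is a priori only the definition of $h$ rather than the equation $\tanh\theta=\text{const}$; indeed a direct computation of $\nabla_T\beta$ via (\ref{DF2}) shows $\nabla_T\beta=(1-c_2\kappa_g-c_3\kappa_n)T$ is tangent to $T$ for \emph{any} principal line, whatever its torsion. I would therefore have to extract $h=0$ from the constant-breadth hypothesis itself — for instance by arguing that the corresponding-point matching between $\alpha$ and $\beta$ pins down $ds^{*}/ds=-1$ in this degenerate branch — and making that step rigorous, rather than assumed, is the delicate part of the argument.
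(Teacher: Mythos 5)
Your proposal follows exactly the route of the paper's own (unlabelled) proof, which is the short computation printed immediately before the theorem: from (\ref{case2.5}) with $m_1=0$ and $\tau_g=0$, the second and third equations give $m_2=c_2$, $m_3=c_3$ constant, so $\beta=\alpha+c_2Y+c_3U$ by (\ref{s3eq1}); the paper then asserts that ``the first equation gives $\tanh\theta=-\frac{c_2}{c_3}$,'' concludes $\theta$ is constant, and uses the principal-line relation $\tau=-\theta'$ to get $\tau=0$, i.e.\ planarity. (Note in passing that from $c_2\sinh\theta+c_3\cosh\theta=0$ the constant is $-c_3/c_2$, as you write; the paper's ratio $-c_2/c_3$ is a typo.)

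The obstacle you isolate in your final paragraph is genuine, and you should know that the paper does \emph{not} resolve it: with $m_1=0$ the first equation of (\ref{case2.5}) says only $h(s)=\kappa\left(c_2\sinh\theta+c_3\cosh\theta\right)$, which is a definition of $h$, not an equation for $\theta$, and the constant-breadth identity (\ref{case2.4}), $m_1(2m'_1+h)=0$, is vacuous on the branch $m_1=0$, so nothing in the hypotheses forces $h=0$. The paper simply divides as if the left-hand side were zero, i.e.\ it tacitly assumes $h=0$ without stating it. Your side computation makes the difficulty concrete: for an arbitrary spacelike principal line, $\beta=\alpha+c_2Y+c_3U$ satisfies $\frac{d\beta}{ds}=\left(1-c_2\kappa_g-c_3\kappa_n\right)T$ by (\ref{DF2}), so whenever $1-c_2\kappa_g-c_3\kappa_n<0$ the pair has opposite unit tangents and $\Vert\beta-\alpha\Vert=c_2^2-c_3^2$ constant, fulfilling the definition of a constant-breadth pair while $\theta$, hence $\tau$, remains unconstrained; planarity therefore cannot follow without an additional hypothesis such as $h=0$ (equivalently $s^*=-s+c$), and your suggestion that such a hypothesis must be extracted or added is correct rather than a defect of your write-up. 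The author's own hedging in the analogous principal-line theorem of the next subsection, where the conclusion is weakened to ``$\alpha$ is a general helix or $\alpha$ is a planar curve,'' reflects the same unresolved alternative. In short: you reproduced the published argument faithfully and, unlike the published proof, correctly located the step that is assumed rather than proved; to make the theorem rigorous one should either add $h=0$ to its hypotheses or weaken its conclusion.
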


\subsection{ Case where $\alpha$ is spacelike and $\epsilon_2=-1$ and $\epsilon_3=1$.}
Let $\alpha$ be a spacelike with $\epsilon_2=-1$ and $\epsilon_3=1$ lying on a timelike surface in $M_f$.\\
Differentiating (\ref{s3eq1}) with respect to $s$ and using (\ref{DF3}) we obtain

\begin{equation}\label{case3.1}
    {\left\{ \begin{array}{ccc}
m'_1&=&m_2 \kappa_g+m_3\kappa_n -h(s) \\
  m'_2 &=& m_1\kappa_g - m_3\tau_g  \\
 m'_3&=&-m_2\tau_g-m_1\kappa_n,
\end{array}\right.}
\end{equation}
where $h(s)=\frac{ds^*}{ds}+1$.\\
Since $\alpha$ is spacelike and $\epsilon_2=-1$ and$\epsilon_3=1$, then, if we assume that $(\alpha, \beta)$ is a curve pair of constant breadth, we have
\begin{eqnarray}
  \Vert\beta-\alpha\Vert=m_1^2-m_2^2+m_3^2=constant,
\end{eqnarray}
which imlplies that
\begin{equation}\label{case3.2}
  m_1\frac{dm_1}{ds}+ m_2\frac{dm_2}{ds}- m_3\frac{dm_3}{ds}=0.
\end{equation}
If we combine (\ref{case3.1}) and (\ref{case3.2}) we get
\begin{eqnarray}\label{case3.3}
  m_1h(s)=0.
\end{eqnarray}
If $\alpha$ and $\beta$ are curves of constant breadth then $m_1=0$ or $h(s)=0$.
If $m_1\neq 0$ (that is $h(s)=0$) then $d=m_1T+m_2Y+m_3U$ becomes a constant vector because $d'=0$. So $\beta(s^*)$ is a translation of $\alpha$ along the constant vector $d$. Also $h(s)=0$ gives $s^*=-s+c$, where $c$ is constant.\\
Since $\kappa_g\neq 0$, here we investigate curves of constant breadth for $m_1\neq 0$ or $m_1=0$ in some special case (asymptotic line or principal line).

\subsubsection{ Case (For Asymptotic line)}
Let $\alpha$ be non-straight line asymptotic line on a timelike surface. Then $\kappa_n= \kappa\sinh\theta = 0$. As $\kappa\neq 0$, we get $\cosh\theta = 0$. So it implies that $\kappa_g = \kappa$, $\tau_g=-\tau$. From  (\ref{case3.1}), we have
following differential equation system
\begin{equation}\label{A1}
  {\left\{ \begin{array}{ccc}
    m'_1 &=& m_2\kappa-h(s)  \\
    m'_2 &=&m_1\kappa+m_3\tau \\
    m'_3 &=& -m_2\tau.
\end{array}\right.}
\end{equation}
By differentiating the second equation in (\ref{A1}) with respect to $s$ and using the first and third equations in (\ref{A1}), we get 
\begin{equation}\label{A3}
\frac{1}{\kappa}\left(\frac{1}{\kappa}(m'_1+h)\right)''+\left[\left(\frac{1}{\kappa}\right)'-\frac{1}{\tau}\left(\frac{\tau}{\kappa}\right)'\right]\left(\frac{1}{\kappa}(m'_1+h)\right)'-\left(\frac{\tau}{\kappa}\right)^2(m'_1+h)+\left(\frac{\tau}{\kappa}\right)'\frac{\kappa}{\tau}m_1-m'_1=0.
\end{equation}

\textbf{Subcase 1:} $m_1\neq 0$ ($h(s)=0)$.\\
The equation (\ref{A3}) is given by 
\begin{equation}\label{A4}
\frac{1}{\kappa}\left(\frac{1}{\kappa}m'_1\right)''+\left[\left(\frac{1}{\kappa}\right)'-\frac{1}{\tau}\left(\frac{\tau}{\kappa}\right)'\right]\left(\frac{1}{\kappa}m'_1\right)'-\left[\left(\frac{\tau}{\kappa}\right)^2+1\right]m'_1+\left(\frac{\tau}{\kappa}\right)'\frac{\kappa}{\tau}m_1=0.
\end{equation}

\begin{theorem} Let $\alpha$ be a asymptotic curve. 
Let $(\alpha; \beta)$ be a  pair of unit speed curves of constant breadth where $\alpha$ is spacelike (with $\epsilon_2=-1$ and $\epsilon_3=1$) lying in a timelike surface in $M_f$. If $m_1$ is non-zero constant then $m_2=0$ and $\alpha$ is a
general helix in the three dimensional Walker manifold $(M, g_f^\epsilon)$. Also the curve $\beta$ is given as:  
\begin{equation}\label{s3eqA5}
  \beta(s^{\star}) = \alpha(s) +m_1T + m_3U 
\end{equation}
where $m_3$ is a real constant and $s^*=-s+c$.
\end{theorem}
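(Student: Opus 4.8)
The plan is to reproduce the short argument already used in the analogous spacelike geodesic case, exploiting the fact that the hypothesis collapses the governing equation (\ref{A3})--(\ref{A4}) to a single algebraic condition while the first-order system (\ref{A1}) fixes the remaining coefficients. The governing observation is that ``$m_1$ is a non-zero constant'' forces $m_1' = m_1'' = 0$, so every term of (\ref{A4}) containing a derivative of $m_1$ drops out.

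First I would insert $m_1' = 0$ into (\ref{A4}). Since $\tfrac{1}{\kappa}m_1' = 0$, the first two terms vanish, and $-\bigl[(\tfrac{\tau}{\kappa})^2 + 1\bigr]m_1' = 0$ as well; the only surviving term is $\bigl(\tfrac{\tau}{\kappa}\bigr)'\tfrac{\kappa}{\tau}\,m_1 = 0$. Because $m_1 \neq 0$ and $\kappa/\tau$ is finite and non-zero (the standing assumptions $\tau \neq 0$ and $\kappa \neq 0$), I may cancel both factors to obtain $\bigl(\tfrac{\tau}{\kappa}\bigr)' = 0$. Thus $\tfrac{\tau}{\kappa}$ is constant, which is precisely the condition for $\alpha$ to be a general helix in $(M, g_f^\epsilon)$.

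Next I would read off $m_2$ and $m_3$ directly from (\ref{A1}). In Subcase~1 the hypothesis that $m_1 \neq 0$ is constant is equivalent to $h(s) = 0$; substituting $m_1' = 0$ and $h = 0$ into the first equation $m_1' = m_2\kappa - h(s)$ gives $m_2\kappa = 0$, and since $\kappa \neq 0$ this forces $m_2 = 0$. The third equation $m_3' = -m_2\tau$ then yields $m_3' = 0$, so $m_3$ is a real constant. Putting $m_2 = 0$ back into the defining expansion (\ref{s3eq1}) produces $\beta(s^\star) = \alpha(s) + m_1 T + m_3 U$, which is exactly (\ref{s3eqA5}); finally $h(s) = \tfrac{ds^\star}{ds} + 1 = 0$ integrates to $s^\star = -s + c$.

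I do not anticipate any genuine difficulty, since the computation is the same algebraic collapse and back-substitution carried out in the preceding cases. The only step deserving a remark is the cancellation $\bigl(\tfrac{\tau}{\kappa}\bigr)'\tfrac{\kappa}{\tau}\,m_1 = 0 \Rightarrow \bigl(\tfrac{\tau}{\kappa}\bigr)' = 0$, which silently relies on $\tau \neq 0$ so that $\tfrac{\kappa}{\tau}$ is meaningful; this is guaranteed by the standing assumption that $\alpha$ is non-planar, and indeed is implicit already in the form of (\ref{A4}), whose coefficients involve $\tfrac{1}{\tau}$.
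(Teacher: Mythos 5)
Your proposal is correct and follows essentially the same route as the paper: substituting $m_1'=0$ into (\ref{A4}) leaves only $\bigl(\tfrac{\tau}{\kappa}\bigr)'\tfrac{\kappa}{\tau}m_1=0$, forcing $\bigl(\tfrac{\tau}{\kappa}\bigr)'=0$ (general helix), and then the first and third equations of (\ref{A1}) with $h=0$ give $m_2=0$ and $m_3$ constant, with $s^\star=-s+c$ from integrating $h=0$. Your write-up is in fact slightly more careful than the paper's, since you make explicit the use of the constant-breadth relation $m_1h(s)=0$ to get $h=0$ and the nonvanishing of $\kappa$ and $\tau$ needed for the cancellations.
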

\begin{proof}
If $m_1$ is non zero constant, then from (\ref{A4}) we obtain that $\left(\frac{\tau}{\kappa}\right)'=0$. So $\alpha$ is a general helix. Also from the first and third equation of (\ref{A1}) we get $m_2=0$ and $m_3$ is a real constant.
\end{proof}

\begin{theorem}
Let $\alpha$ be a asymptotic line. Let $(\alpha, \beta)$ be a pair of unit speed curves of constant breadth where $\alpha$ is timelike curve and lying in a timelike surface $M_f$. If $m_1$ is not zero, then the curve $\beta$ can be expressed as one of the following cases:
\begin{eqnarray}\label{A9}
  \beta(s^*)=\alpha(s)+m_1T+\Dot{m}_1Y+\frac{1}{c_0}(\Ddot{m}_1+m_1)U,
\end{eqnarray}
where
$$m_1=\frac{1}{\sqrt{c_0^2+1}}\left(a_1e^{\sqrt{c_0^2+1}z}-a_2e^{\sqrt{c_0^2+1}z}\right).$$
\end{theorem}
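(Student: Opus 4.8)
The plan is to imitate, in the present signature, the reduction that already produced Theorems 4.9 and 4.11: impose the general-helix hypothesis, collapse the variable-coefficient third-order equation (\ref{A4}) into a constant-coefficient ODE by the arclength-to-$z$ substitution, solve it, and then read off $m_2,m_3$ from the Darboux system (\ref{A1}).

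First I would note that since $m_1\neq 0$, the constant-breadth relation (\ref{case3.3}) forces $h(s)\equiv 0$, so the working system is (\ref{A1}) with $h=0$ and the governing equation is (\ref{A4}). Because the conclusion is stated for a general helix, I invoke $\tfrac{\tau}{\kappa}=c_0$ constant, so that $\left(\tfrac{\tau}{\kappa}\right)'=0$ and the last term of (\ref{A4}) vanishes. The key observation is then that the surviving first two terms assemble into a single total $s$-derivative,
\[
\frac{1}{\kappa}\left(\frac{1}{\kappa}m'_1\right)''+\left(\frac{1}{\kappa}\right)'\left(\frac{1}{\kappa}m'_1\right)'=\left(\frac{1}{\kappa}\left(\frac{1}{\kappa}m'_1\right)'\right)',
\]
so that (\ref{A4}) reduces to $\left(\tfrac{1}{\kappa}\left(\tfrac{1}{\kappa}m'_1\right)'\right)'-(c_0^2+1)m'_1=0$, exactly the spacelike analogue of (\ref{G6}).

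Next I would change the independent variable to $z=\int\kappa\,ds$, using $m'_1=\dot m_1\kappa$ and $\tfrac{1}{\kappa}\tfrac{d}{ds}=\tfrac{d}{dz}$; this collapses the reduced equation to the constant-coefficient ODE $\dddot m_1-(c_0^2+1)\dot m_1=0$. Setting $u=\dot m_1$ gives $\ddot u=(c_0^2+1)u$, whose solution is a combination of $e^{\pm\sqrt{c_0^2+1}\,z}$, and one integration produces the stated
\[
m_1=\frac{1}{\sqrt{c_0^2+1}}\left(a_1 e^{\sqrt{c_0^2+1}\,z}-a_2 e^{-\sqrt{c_0^2+1}\,z}\right).
\]
Finally, the remaining coefficients come from (\ref{A1}): the first equation gives $m_2=m'_1/\kappa=\dot m_1$, and the second gives $m_3=\tfrac{1}{\tau}\left(m'_2-m_1\kappa\right)=\tfrac{1}{c_0}\left(\ddot m_1-m_1\right)$; inserting these into (\ref{s3eq1}) yields the claimed expression for $\beta$.

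The routine part here is the ODE reduction, which is identical in form to the earlier cases once the total-derivative identity is spotted. The step I expect to require the most care is the back-substitution for $m_2$ and $m_3$: the sign conventions in (\ref{A1}) for this signature differ from the timelike and the $\epsilon_2=1$ spacelike cases, so I would cross-check the recovered $m_2,m_3$ against the invariant $m_1^2-m_2^2+m_3^2=\text{const}$ to confirm that the coefficients placed on $Y$ and $U$ in $\beta(s^*)$ carry the correct signs.
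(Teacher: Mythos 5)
Your proposal reproduces the paper's own proof essentially step for step: $m_1\neq 0$ forces $h\equiv 0$ via (\ref{case3.3}), the helix hypothesis $\tau/\kappa=c_0$ kills the last term of (\ref{A4}) and the total-derivative identity collapses it to $\left(\frac{1}{\kappa}\left(\frac{1}{\kappa}m'_1\right)'\right)'-(c_0^2+1)m'_1=0$, the substitution $z=\int\kappa\,ds$ gives $\dddot{m}_1-(c_0^2+1)\dot{m}_1=0$ exactly as in (\ref{A6})--(\ref{A7}), and $m_2,m_3$ are read off from (\ref{A1}). The one divergence is actually in your favor: your back-substituted $m_3=\frac{1}{c_0}(\ddot{m}_1-m_1)$, which your invariant check $m_1^2-m_2^2+m_3^2=\mathrm{const}$ confirms, is the algebraically correct coefficient on $U$, whereas the theorem statement and the paper's proof print $\frac{1}{c_0}(\ddot{m}_1+m_1)$ (a sign typo, companion to the paper's repeated exponent $e^{\sqrt{c_0^2+1}z}$ in the displayed $m_1$, which you also silently corrected to $e^{-\sqrt{c_0^2+1}z}$), so your claim that the computation ``yields the claimed expression'' is true only after these typographical corrections to the statement.
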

\begin{proof}
Let us consider that $\alpha$ is a general helix in Walker 3-manifold. Then we have $\frac{\tau}{\kappa}=c_0=constant$. From (\ref{A4}), we have 
\begin{equation}\label{A6}
\left(\frac{1}{\kappa}\left(\frac{1}{\kappa}m'_1\right)'\right)'-\left(c_0^2+1\right)m'_1=0.
\end{equation}
By means of changing of the independant variable $s$ with $z=\int\kappa ds$, we obtain
\begin{eqnarray}\label{A7}
  \dddot{m}_1-(c_0^2+1)\Dot{m}_1=0.
\end{eqnarray}
If we solve this equation we get 
\begin{eqnarray}\label{A8}
  m_1=\frac{1}{\sqrt{c_0^2+1}}\left(a_1e^{\sqrt{c_0^2+1}z}-a_2e^{\sqrt{c_0^2+1}z}\right)
\end{eqnarray}
From (\ref{A1}) we obtain $m_2=\Dot{m}_1$ and $m_3=\frac{1}{c_0}(\Ddot{m}_1+m_1)$.
\end{proof}
\textbf{Subcase 2: $m_1=0$}\\
With the same computation as above, we have the following theorem:
\begin{theorem}
Let $\left(\alpha; \beta\right)$ be a curve pair of constant breadth in $(M, g_f)$. If $\alpha$ is
a spacelike asymptotic curve (with $\epsilon_2=-1$ and $\epsilon_3=1$) lying in a timelike surface in $M_f$. If $m_1=0$, then the curve $\beta$ is given by
\begin{displaymath}
  \beta(s^*) = \alpha(s) + \left[b_1\cos\left(\int\tau ds\right)+b_2\sin\left(\int\tau ds\right)\right]Y(s)+\left[-b_1\sin\left(\int\tau ds\right)\\
  +b_2\cos\left(\int\tau ds\right)\right]U(s).
\end{displaymath}
\end{theorem}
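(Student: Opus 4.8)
The plan is to mirror the timelike $m_1=0$ computation that led to (\ref{s3eq21}), carried out now for the system attached to the spacelike asymptotic case. First I would take the asymptotic specialization (\ref{A1}) of the constant-breadth equations (\ref{case3.1}) — obtained by inserting $\kappa_g=\kappa$, $\kappa_n=0$, $\tau_g=-\tau$ — and impose $m_1=0$. The first equation then only records the relation $h(s)=m_2\kappa$ and plays no further role, while the surviving two equations collapse to the closed skew system $m_2'=m_3\tau$ and $m_3'=-m_2\tau$ for the two unknown coefficient functions.

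Next I would decouple this pair. From $m_2'=m_3\tau$ I solve $m_3=m_2'/\tau$ and substitute into $m_3'=-m_2\tau$, which yields the single second-order linear ODE $m_2''-\frac{\tau'}{\tau}m_2'+\tau^2 m_2=0$ — exactly the shape of (\ref{s3eq18}). To kill the variable coefficient I introduce the new independent variable $w=\int\tau\,ds$, i.e. $\frac{dw}{ds}=\tau$; using the chain-rule identities already recorded before (\ref{s3eq20}) the equation reduces to the harmonic oscillator $\ddot m_2+m_2=0$. Its general solution is $m_2=b_1\cos w+b_2\sin w$, and then $m_3=m_2'/\tau=\dot m_2=-b_1\sin w+b_2\cos w$, with $w=\int\tau\,ds$ and $b_1,b_2$ real constants.

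Finally I would substitute $m_1=0$ together with these expressions for $m_2$ and $m_3$ back into the breadth decomposition (\ref{s3eq1}), $\beta(s^{\star})=\alpha(s)+m_2 Y(s)+m_3 U(s)$, to obtain the asserted formula for $\beta$. The calculation is routine and essentially identical to the one producing (\ref{s3eq21}); the only genuinely delicate point, and the one I would guard against, is the sign pattern of the reduced subsystem. It is precisely the skew combination $m_2'=m_3\tau,\ m_3'=-m_2\tau$ that forces the constant-coefficient equation $\ddot m_2+m_2=0$ and hence the trigonometric solution; a single flipped sign introduced while reducing (\ref{case3.1}) to (\ref{A1}) (for instance mishandling $\tau_g=-\tau$ or the causal character $\epsilon_2=-1$) would instead give $\ddot m_2-m_2=0$ and spurious exponential answers. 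I would therefore verify that reduction carefully before integrating.
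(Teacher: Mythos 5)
Your route is exactly the paper's: the paper offers no argument for this theorem beyond the sentence ``With the same computation as above,'' pointing back to the reduction (\ref{s3eq16})--(\ref{s3eq21}) from the timelike geodesic case, and that is precisely the computation you carry out. With $m_1=0$ the printed system (\ref{A1}) collapses to $m_2'=m_3\tau$, $m_3'=-m_2\tau$, elimination gives the analogue of (\ref{s3eq18}), the substitution $w=\int\tau\,ds$ reduces it to the oscillator (\ref{s3eq20}), and back-substitution into (\ref{s3eq1}) reproduces the displayed $\beta$ verbatim. As a derivation of the printed statement from the printed system (\ref{A1}), your proposal is correct and coincides with the proof the paper intends.

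However, the sign verification you postpone to your last paragraph is where the real content lies, and carrying it out overturns the printed statement rather than confirming it. Substituting $\kappa_g=\kappa$, $\kappa_n=0$, $\tau_g=-\tau$ into (\ref{case3.1}) gives $m_3'=-m_2\tau_g=+m_2\tau$, \emph{not} the $m_3'=-m_2\tau$ recorded in (\ref{A1}); with the corrected sign the reduced pair in the variable $w$ is $\dot m_2=m_3$, $\dot m_3=m_2$, hence $\ddot m_2-m_2=0$, and the coefficients are hyperbolic, $m_2=b_1\cosh\left(\int\tau\,ds\right)+b_2\sinh\left(\int\tau\,ds\right)$ and $m_3=b_1\sinh\left(\int\tau\,ds\right)+b_2\cosh\left(\int\tau\,ds\right)$. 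Two independent cross-checks confirm that the hyperbolic branch is the right one here. First, in this causal configuration the breadth is $m_1^2-m_2^2+m_3^2$, and along the corrected system one has $(-m_2^2+m_3^2)'\equiv 0$, whereas your trigonometric pair gives $(-m_2^2+m_3^2)'=-4\tau\,m_2m_3$, which is not identically zero unless $b_1=b_2=0$: the invariant your solution preserves is $m_2^2+m_3^2$, the one relevant to the timelike case ($\epsilon_2=\epsilon_3=1$) behind (\ref{s3eq21}), not to the present case; so the printed conclusion is actually incompatible with the theorem's own constant-breadth hypothesis. Second, the paper itself uses the hyperbolic/exponential branch (\ref{A6})--(\ref{A8}) in the $m_1\neq 0$ subcase of the very same system. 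So the gap is not in your method, which mirrors (\ref{s3eq16})--(\ref{s3eq21}) faithfully, but in ultimately trusting (\ref{A1}) as printed: the exponential answers you dismissed as ``spurious'' are the correct ones, and the circular functions in the statement should be replaced by hyperbolic ones.
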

\subsubsection{Case (For Principal line)}
In this case we have the two following theorems:
\begin{theorem}
Let $(\alpha, \beta)$ be a pair curves of constant breadth in $(M, g_f\epsilon)$. Let $\alpha$ be spacelike principal line (with $\epsilon_2=-1$ and $\epsilon_3=1$) and a general helix then $\beta$ is given by 
\begin{eqnarray}
  \beta(s^*)=\alpha(s)+m_1T+m_2Y+m_3U,
\end{eqnarray}
where 
\begin{eqnarray*}
  m_1=\frac{1}{\sqrt{1+c^2}}\left(a_1e^{\sqrt{1+c^2}\theta}-a_2e^{-\sqrt{1+c^2}\theta}\right),
\end{eqnarray*}
$m_2=c\int m_1\cosh\theta d\theta$ and $m_3=-c\int m_1\sinh\theta d\theta$.
\end{theorem}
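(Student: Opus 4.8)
The plan is to reproduce, with the hyperbolic bookkeeping adapted to the frame (\ref{DF3}), the argument already carried out for the principal line in the preceding subsection. First I would specialise the general system (\ref{case3.1}) to a principal line: the hypothesis $\tau_g=0$, together with $\kappa_g=\kappa\cosh\theta$, $\kappa_n=\kappa\sinh\theta$ and $\tau_g=\theta'-\tau$, forces $\theta'=\tau$ and reduces (\ref{case3.1}) to
\begin{equation*}
m'_1=\kappa\bigl(m_2\cosh\theta+m_3\sinh\theta\bigr)-h,\qquad m'_2=m_1\kappa\cosh\theta,\qquad m'_3=-m_1\kappa\sinh\theta .
\end{equation*}
Changing the independent variable to $\theta=\int\tau\,ds$ and writing $\phi=\kappa/\tau$, $g(\theta)=h/\tau$, this becomes (here $\dot{\,}$ denotes differentiation with respect to $\theta$)
\begin{equation*}
\dot m_1=\phi\bigl(m_2\cosh\theta+m_3\sinh\theta\bigr)-g,\qquad \dot m_2=m_1\phi\cosh\theta,\qquad \dot m_3=-m_1\phi\sinh\theta,
\end{equation*}
which is exactly the hyperbolic analogue of (\ref{case2.7}) with $\cosh$ and $\sinh$ interchanged.

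The constant-breadth hypothesis enters through (\ref{case3.3}): since $m_1\neq0$ we must have $h=0$, hence $g\equiv0$. Imposing the general-helix condition $\phi=\kappa/\tau=c$ constant, the first equation reads $\dot m_1=c(m_2\cosh\theta+m_3\sinh\theta)$. The core of the proof is then a double differentiation. Differentiating once and substituting $\dot m_2$ and $\dot m_3$ gives $\ddot m_1=c^2m_1+c(m_2\sinh\theta+m_3\cosh\theta)$, where the identity $\cosh^2\theta-\sinh^2\theta=1$ is precisely what makes the $m_1$-terms collapse to $c^2m_1$. Differentiating a second time, substituting $\dot m_2,\dot m_3$ once more and recognising the combination $c(m_2\cosh\theta+m_3\sinh\theta)=\dot m_1$, I obtain the decoupled equation
\begin{equation*}
\dddot m_1-(1+c^2)\dot m_1=0,
\end{equation*}
which is the same ODE as (\ref{case2.11}). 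Its characteristic roots are $0$ and $\pm\sqrt{1+c^2}$, so with the normalisation of the constants used in (\ref{case2.12}) the solution is $m_1=\frac{1}{\sqrt{1+c^2}}\bigl(a_1e^{\sqrt{1+c^2}\theta}-a_2e^{-\sqrt{1+c^2}\theta}\bigr)$.

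It then remains to recover $m_2$ and $m_3$ by integrating the two surviving equations directly: $\dot m_2=cm_1\cosh\theta$ yields $m_2=c\int m_1\cosh\theta\,d\theta$ and $\dot m_3=-cm_1\sinh\theta$ yields $m_3=-c\int m_1\sinh\theta\,d\theta$, which are exactly the expressions in the statement; inserting $m_1,m_2,m_3$ into (\ref{s3eq1}) gives the asserted form of $\beta$. The one genuinely delicate point is the second differentiation, where one must verify that the mixed $\cosh\theta\sinh\theta$ terms cancel so that the right-hand side reduces cleanly to $(1+c^2)\dot m_1$; once this is checked, the remaining steps are routine integrations. I would emphasise that the interchange of $\cosh$ and $\sinh$ relative to the $\epsilon_2=1$ principal-line theorem is the only structural difference, and it comes directly from the transformation equations of (\ref{DF3}).
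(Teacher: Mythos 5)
Your proposal is correct and is essentially the proof the paper intends: the paper states this theorem without any proof, and your argument is precisely the hyperbolic analogue of the paper's own proof of the $\epsilon_2=1$ principal-line theorem, specialising (\ref{case3.1}) with $\tau_g=0$ (so $\theta'=\tau$), using $m_1h(s)=0$ from (\ref{case3.3}) to get $g\equiv 0$, and reducing under $\phi=\kappa/\tau=c$ to the same decoupled equation $\dddot{m}_1-(1+c^2)\dot{m}_1=0$ as (\ref{case2.11}), after which $m_2$ and $m_3$ follow by direct integration. The only imprecision, shared with the paper's statement, is that the general solution of this third-order ODE carries an additive constant which both you and the paper silently set to zero.
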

\begin{theorem}
Let $(\alpha, \beta)$ be a pair curves of constant breadth in $(M, g_f\epsilon)$. Let $\alpha$ be principal line (with $\epsilon_2=-1$ and $\epsilon_3=1$) lying in a timelike surface in $M_f$. If $m_1=0$ then $\alpha$ is general helix or $\alpha$ is planar curve and the curve $\beta$ is expressed as
\begin{eqnarray}
  \beta(s^*)=\alpha(s)+c_2Y+c_3U,
\end{eqnarray}
where $c_2$ and $c_3$ are constants.
\end{theorem}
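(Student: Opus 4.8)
The plan is to mirror the principal-line arguments already used for the timelike curve and for the first spacelike curve ($\epsilon_2=1$, $\epsilon_3=-1$), now carried out with the hyperbolic Darboux relations (\ref{DF3}). First I would specialise the system (\ref{case3.1}) to a principal line. Since $\tau_g=0$ together with $\tau_g=\theta'-\tau$ forces $\theta'=\tau$, substituting $\kappa_g=\kappa\cosh\theta$ and $\kappa_n=\kappa\sinh\theta$ turns (\ref{case3.1}) into
\begin{equation*}
m_1'=m_2\kappa\cosh\theta+m_3\kappa\sinh\theta-h,\qquad m_2'=m_1\kappa\cosh\theta,\qquad m_3'=-m_1\kappa\sinh\theta.
\end{equation*}

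Setting $m_1=0$ is the decisive simplification. The second and third equations then force $m_2\equiv c_2$ and $m_3\equiv c_3$ with $c_2,c_3$ constant, so that $\beta-\alpha=c_2Y+c_3U$, which is precisely the asserted form of $\beta$; the breadth $-c_2^2+c_3^2$ is automatically constant, so the constant-breadth hypothesis is consistent, and the first equation only serves to fix $h=\kappa(c_2\cosh\theta+c_3\sinh\theta)$.

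To extract the helix/planar dichotomy I would change the independent variable to $\theta=\int\tau\,ds$ and set $\phi=\kappa/\tau$, $g=h/\tau$, exactly as in the derivation of (\ref{case2.9}). The first equation then reads $\dot m_1=\phi(m_2\cosh\theta+m_3\sinh\theta)-g$ and, since $m_1=0$, yields $g=\phi(c_2\cosh\theta+c_3\sinh\theta)$. Differentiating the reparametrised system twice produces a third-order equation for $m_1$; evaluating it at $m_1=0$ and inserting this value of $g$ should collapse all but one transport term and leave the factored identity
\begin{equation*}
\dot\phi\,(c_2\sinh\theta+c_3\cosh\theta)=0.
\end{equation*}
The conclusion is then immediate: either $\dot\phi=0$, so $\kappa/\tau$ is constant and $\alpha$ is a general helix, or else $c_2\sinh\theta+c_3\cosh\theta=0$, i.e.\ $\tanh\theta$ is constant; the latter makes $\theta$ constant, whence $\tau=\theta'=0$ and $\alpha$ is a planar curve.

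The hard part will be producing the factored identity cleanly. In the hyperbolic setting the auxiliary combinations $A=c_2\cosh\theta+c_3\sinh\theta$ and $C=c_2\sinh\theta+c_3\cosh\theta$ obey $\dot A=C$ and $\dot C=A$, rather than $\dot C=-A$ as in the trigonometric (timelike) case. Consequently a careless substitution of $g=\phi A$ into the fully reduced third-order relation causes every term to cancel and returns only the trivial $0=0$. The entire content therefore lies in keeping the transport term $\dot\phi\,C$ isolated during the reduction and in tracking the exact sign with which it enters. Verifying that this term genuinely survives, and does so with the coefficient $C=c_2\sinh\theta+c_3\cosh\theta$ required for the planar alternative, is the step I would carry out by hand rather than by appeal to analogy.
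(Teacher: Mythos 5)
Your setup is correct and is exactly what the paper intends (the paper states this theorem without proof; the evident model is the proof of the timelike analogue, equations (\ref{s3eq39})--(\ref{s3eq43})): specialising (\ref{case3.1}) to a principal line with $\tau_g=0$, $\kappa_g=\kappa\cosh\theta$, $\kappa_n=\kappa\sinh\theta$, the hypothesis $m_1=0$ immediately forces $m_2=c_2$, $m_3=c_3$ constant, hence $\beta=\alpha+c_2Y+c_3U$ with $h=\kappa(c_2\cosh\theta+c_3\sinh\theta)$. That half of the theorem is complete in your proposal, and your translation of the alternative $c_2\sinh\theta+c_3\cosh\theta=0$ into ``$\theta$ constant, so $\tau=\theta'=0$, so $\alpha$ planar'' is also fine \emph{conditional on} the factored identity.

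The genuine gap is precisely at the step you deferred, and the verification you postpone cannot succeed in the form you describe. The third-order equation (the analogue of (\ref{case2.9})) is a differential consequence of the first-order system; since $(m_1,m_2,m_3)=(0,c_2,c_3)$ together with $g=\phi\left(c_2\cosh\theta+c_3\sinh\theta\right)$ is an exact solution of that system for an \emph{arbitrary} function $\phi$, any honest reduction must evaluate to $0=0$, and no condition on $\dot\phi$ can emerge: the transport term $\dot\phi\,C$ does not survive. Your diagnosis that this is a hyperbolic phenomenon ($\dot C=+A$ versus the trigonometric $\dot Q=-P$) is also off the mark: the identical total cancellation occurs in the timelike case, where the term $+\dot\phi\left(c_2\sin\theta+c_3\cos\theta\right)$ produced by $\ddot g-\frac{d}{d\theta}\left(\frac{\dot\phi}{\phi}g\right)+g$ is cancelled by the honestly retained $-\dot\phi Q$. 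The paper's Theorem 4.9 reaches $\dot\phi\left(c_2\sin\theta+c_3\cos\theta\right)=0$ only because in (\ref{s3eq39}) the combination $m_2\sin\theta+m_3\cos\theta$ was rewritten as $-\sin\theta\int m_1\phi\cos\theta\,d\theta+\cos\theta\int m_1\phi\sin\theta\,d\theta$ with the constants of integration dropped; when $m_1=0$ those constants are exactly $c_2,c_3\neq0$, so discarding them is what manufactures the constraint. To reproduce the stated dichotomy here you must import that same step explicitly, i.e.\ treat $\ddot g-\frac{d}{d\theta}\left(\frac{\dot\phi}{\phi}g\right)-g=0$ as an independent constraint on $g$, and you should flag it as an assumption rather than a computation to be ``carried out by hand'': as an unconditional assertion the helix-or-planar dichotomy does not follow from the constant-breadth equations, since for \emph{any} principal line $\alpha$, general helix or not, the curve $\beta=\alpha+c_2Y+c_3U$ satisfies the system with opposite parallel tangents and constant $\Vert\beta-\alpha\Vert$.
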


\subsection*{Acknowledgments}
The author would like to thank the anonymous Referees for their comments and suggestions.
All many thanks to professor Ferdag Kahraman from Ahi Evran University (Turkish)  for their remarks and suggestions.

\bibliographystyle{amsplain}

\end{document}